\def\Hom{{\rm Hom}}
\theoremstyle{plain}
\newtheorem{theorem}{Theorem}[section]
\newtheorem{proposition/example}[theorem]{Proposition/Example}
\newtheorem{proposition}[theorem]{Proposition}
\newtheorem{corollary}[theorem]{Corollary}
\newtheorem{lemma}[theorem]{Lemma}
\newtheorem{conjecture}[theorem]{Conjecture}
\theoremstyle{definition}
\newtheorem{definition}[theorem]{Definition}
\newtheorem{remark}[theorem]{Remark}
\newtheorem{example}[theorem]{Example}
\newtheorem{fact}[theorem]{Fact}
\newtheorem{conjecture/question}[theorem]{Conjecture/Question}
\newtheorem{remark/definition}[theorem]{Remark/Definition}
\newtheorem{definition/notation}[theorem]{Definition/Notation}
\numberwithin{equation}{section}
\newcommand{\dynkinradius}{.06cm}
\newcommand{\dynkinstep}{.8cm}
\newcommand{\dynkindot}[2]{\fill (\dynkinstep*#1,\dynkinstep*#2) circle (\dynkinradius);}
\newcommand{\dynkinline}[4]{\draw[thin] (\dynkinstep*#1,\dynkinstep*#2) -- (\dynkinstep*#3,\dynkinstep*#4);}
\newcommand{\dynkindots}[4]{\draw[dotted] (\dynkinstep*#1,\dynkinstep*#2) -- (\dynkinstep*#3,\dynkinstep*#4);}
\newenvironment{dynkin}{\begin{tikzpicture}[decoration={markings,mark=at position 0.7 with {\arrow{>}}}]}
{\end{tikzpicture}}
\begin{document}
\title{\textbf{Stability and Indecomposability of the Representations of Quivers of $A_n$-type}}

\author{Pengfei Huang}

\address{ \textsc{School of Mathematics, University of Science and Technology of China, Hefei 230026, China}}

\email{pfhwang@mail.ustc.edu.cn}

\author{Zhi Hu}

\address{ \textsc{Research Institute for Mathematical Sciences, Kyoto University, Kyoto 606-8502, Japan}}

\email{halfask@mail.ustc.edu.cn}

\subjclass[2010]{16G20, 14L24}

\keywords{Reineke conjecture, Quiver representation, Weight system, Semi-invariant}
\date{}
\thanks{This paper is completed around May 2017.}

\begin{abstract}
In his paper \cite{MR1}, Markus Reineke proposed a conjecture that there exists a stable weight system $\Theta$ for every indecomposable representation of Dynkin type quiver.  In this paper, we showed  this conjecture is true for  quivers of $A_n$-type  by   combinatorial construction of a special weight system.  We also reinterpret this weight system in terms of semi-invariant theory.
\end{abstract}

\maketitle
\tableofcontents

\section{Introduction}
In his remarkable paper \cite{MR1} published on Invent. Math. in 2003, Markus Reineke proposed the following conjecture:
\begin{conjecture}[Reineke, \cite{MR1}]
If $Q$ is a quiver of Dynkin type, there exists a weight system $\Theta$ on $Q$ such that the stable representations are precisely the indecomposable ones.
\end{conjecture}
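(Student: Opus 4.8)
The plan is to prove the conjecture when $Q$ is of type $A_n$ by an explicit combinatorial construction. Fix such a $Q$ with vertex set $Q_0=\{1,\dots,n\}$ and an edge between $v$ and $v+1$ for each $v$; set $\epsilon_v=+1$ if that edge is the arrow $v\to v+1$ and $\epsilon_v=-1$ otherwise. By Gabriel's theorem the indecomposable representations of $Q$ are exactly the interval modules $M_{[i,j]}$ for $1\le i\le j\le n$, which are one-dimensional on $[i,j]$, zero elsewhere, with identity maps along the interior arrows. A stable representation is indecomposable for purely formal reasons: if $M\cong A\oplus B$ with $A,B\neq 0$ then $A$ and $B$ embed in $M$ while $\mu_\Theta(M)$ is a convex combination of $\mu_\Theta(A)$ and $\mu_\Theta(B)$, so one of them is $\ge\mu_\Theta(M)$, contradicting stability. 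Thus ``stable $\Rightarrow$ indecomposable'' holds for every weight system $\Theta$, and the entire content of the statement is the converse; so it suffices to exhibit one $\Theta\in\Q^{Q_0}$ for which every $M_{[i,j]}$ is stable.

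First I would turn the stability of $M_{[i,j]}$ into a purely combinatorial condition. A subrepresentation of $M_{[i,j]}$ is supported on a subset $S\subseteq[i,j]$ from which the arrows of $Q$ inside $[i,j]$ do not escape; walking along the path shows that $S$ must be a disjoint union of subintervals, each of which is a direct summand of the subrepresentation and each of whose finite endpoints is an endpoint of $[i,j]$ or a vertex into which the adjacent boundary edge points. Hence the indecomposable submodules of $M_{[i,j]}$ are precisely the $M_{[a,b]}$ with $i\le a\le b\le j$, $a=i$ or $\epsilon_{a-1}=+1$, and $b=j$ or $\epsilon_b=-1$; call such $[a,b]$ admissible. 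A general submodule is a direct sum of these, and the slope of a direct sum lies between the slopes of its summands, so $M_{[i,j]}$ is stable iff $\mu_\Theta(M_{[a,b]})<\mu_\Theta(M_{[i,j]})$ for every admissible $[a,b]\subsetneq[i,j]$. Writing $[i,j]=[i,a-1]\sqcup[a,b]\sqcup[b+1,j]$ and noting that $\mu_\Theta(M_{[i,j]})$ is the weighted average of the slopes of the three pieces, this inequality is equivalent to: the average of the $\theta_v$ over the deleted peripheral part $[i,a-1]\sqcup[b+1,j]$ strictly exceeds the average over $[a,b]$.

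It then remains to construct $\Theta$ so that ``deleting an admissible peripheral piece strictly raises the average weight'' holds for every nested admissible pair at once. For the linear orientation $1\to\cdots\to n$ the admissible submodules of $M_{[i,j]}$ are just the $M_{[k,j]}$ with $i<k\le j$, and any strictly decreasing $\Theta$ does the job; in general $\Theta$ should follow the sink-source profile of $Q$ (small on sinks, large on sources, strictly monotone along each maximal unidirectional stretch) but with the magnitudes tuned. I would build it by induction on $n$, splitting $Q$ at an interior sink or source (if $Q$ has neither it is linearly oriented and a strictly monotone $\Theta$ already works) into two $A$-type subquivers glued at that vertex, assembling $\Theta$ from weight systems for the two pieces and then shifting and rescaling one relative to the other so that the ``crossing'' intervals $M_{[i,j]}$, those $[i,j]$ containing the splitting vertex in their interior, also satisfy all the required inequalities. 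The hard part is precisely this: the family of admissible submodules of $M_{[i,j]}$ depends delicately on the positions of all the sinks and sources between $i$ and $j$, so one must engineer a single $\Theta$ that dominates every nested admissible pair simultaneously, and carrying the bookkeeping through the inductive splitting while checking that the crossing intervals impose no incompatible constraint is where the real effort goes. Finally I would reinterpret the construction in semi-invariant language: for a character $\chi$ with $\chi(\underline{\dim}\,M_{[i,j]})=0$, King's criterion identifies $\chi$-(semi)stability of $M_{[i,j]}$ with the non-vanishing at $M_{[i,j]}$ of a Schofield-type determinantal semi-invariant of weight a positive multiple of $\chi$; since path algebras of quivers are hereditary these are single determinants and hence completely explicit for interval modules, so writing them down recasts the combinatorial weight $\Theta$ and re-proves the stability of each $M_{[i,j]}$.
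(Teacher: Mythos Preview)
Your reductions are correct and are essentially the same ones the paper uses implicitly: stability implies indecomposability for free, the subrepresentations of $M_{[i,j]}$ are direct sums of admissible subintervals, and it suffices to compare slopes with each admissible proper subinterval. From that point on, however, the paper does \emph{not} argue by induction. It writes down an explicit closed-form weight system depending on the global position of each vertex: with $l_i,r_i$ the number of vertices to the left and right of $i$, it sets $\theta_i=l_i+r_i+2l_ir_i$ at a source, the negative of this at a sink, and $\pm(r_i-l_i)$ at a pass-through vertex, and then verifies directly, by a case analysis on how the support of a subrepresentation sits inside $[p,q]$ and how $[p,q]$ sits inside $[1,n]$, that every $M_{[p,q]}$ is stable. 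The quadratic term $2l_ir_i$ pushes sinks far down and sources far up, and this global boost is precisely what makes all the inequalities hold at once.

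Your inductive plan has a real gap at the step you yourself flag as ``the hard part''. You propose to split at an interior sink or source $s$, take weight systems $\Theta_L,\Theta_R$ on the two pieces by induction, and then shift and rescale one relative to the other to handle the crossing intervals. Affine adjustments of the two pieces are not enough. Take $Q:\ 1\to 2\to 3\leftarrow 4\leftarrow 5$ and split at the sink $3$. Any strictly monotone $\Theta_L,\Theta_R$ agreeing at $3$, after normalising by a global shift and scale, has the form $(2\mu,\mu,0,\lambda,2\lambda)$ with $\mu,\lambda>0$. Stability of $M_{[1,4]}$ against its submodule $M_{[1,3]}$ forces $(2\mu+\mu+0)/3<(2\mu+\mu+0+\lambda)/4$, i.e.\ $\mu<\lambda$; stability of $M_{[2,5]}$ against its submodule $M_{[3,5]}$ forces $(0+\lambda+2\lambda)/3<(\mu+0+\lambda+2\lambda)/4$, i.e.\ $\lambda<\mu$. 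These are incompatible. The paper's weight here is $(4,2,-12,2,4)$: the sink carries a large negative value that is \emph{not} an affine image of any monotone triple $(2\mu,\mu,0)$ on the left piece. So either your inductive hypothesis has to be strengthened to produce weight systems with quantitative control at the boundary vertex (not merely ``some $\Theta$ exists''), or the value at the splitting vertex has to be treated as a separate free parameter driven far below both pieces --- neither of which you have set up. As written, the proposal correctly locates where the difficulty is but does not supply the mechanism that resolves it; the paper's explicit formula with its $2l_ir_i$ term is exactly that missing mechanism.
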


If this conjecture is true, it will has some valuable applications. For example, it can be used to study the stratification of representation varieties of Dynkin quiver  \cite{MR1}, and to study identities between products of quantum dilogarithm series associated with Dynkin quivers \cite{BK}\footnote{We thank the referee for pointing out this reference to us.}.
In this paper, we will give an elementary proof of Reineke's conjecture for the quivers of $A_n$-type by  combinatorial construction of a special weight system\footnote{From the communications with Prof. Reineke, Prof. Hille and Prof. Juteau, we know that Hille and Juteau have a proof for this conjecture in $A_n$-case, however it is unpublished. Meanwhile, from the report of the referee, we know that in his paper \cite{BK}, Keller suggests  to resolve this conjecture by the method of \cite{IT}. }.

On the other hand, Juteau \cite{d} has found some counterexamples for Reineke's conjecture in the quivers of $D$- and $E$-type by computer program (unpublished). Therefore, a modified version of original Reineke's conjecture is proposed as follows\footnote{We do not know whether such modified version is true for  the  Dynkin quivers (even for tame quivers), but so far as we know, Juteau and Hille \cite{d,hi} are attempting to prove it.}.
 \begin{conjecture}[= Conjecture \ref{conj}]\label{conj1}
Let $Q$ be a  Dynkin quiver, then the abelian category  $\mathrm{\mathbf{Rep}}_k(Q)$ is a maximal stable category.
\end{conjecture}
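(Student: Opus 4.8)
The plan is first to reduce to the four Dynkin families $A_n$, $D_n$, $E_6$, $E_7$, $E_8$, using that the statement is insensitive to orientation: a reflection functor at a sink or source preserves indecomposability and transports a weight system on one orientation to one on the reflected quiver, so the whole ``which indecomposables are stable'' picture depends only on the underlying diagram, up to the action of the Weyl group on dimension vectors. For type $A_n$ there is then nothing more to do: the main theorem of this paper produces a weight system whose stable representations are exactly the indecomposables, which is the strongest conceivable form of stability, so $\mathbf{Rep}_k(A_n)$ is a fortiori a maximal stable category. All the content therefore sits in types $D$ and $E$, where Juteau's computer-assisted examples show that no single weight system can make every indecomposable stable; maximality has to be obtained without such a ``perfect'' weight system.

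\textbf{Turning maximality into combinatorics.} Next I would use the semi-invariant reinterpretation developed in this paper, together with King's numerical criterion (a representation $M$ is $\Theta$-semistable exactly when some semi-invariant of the matching weight is nonzero on it) and the canonical decomposition of dimension vectors, to view the set $S(\Theta)$ of $\Theta$-stable indecomposables as a locally constant function on the chambers of the rational fan cut out in weight space by the hyperplanes $\{\Theta(\underline{\dim} N)=0\}$, where $N$ runs over subrepresentations of indecomposables. The steps would be: (i) describe that fan explicitly in terms of the positive roots and the Euler form of the diagram; (ii) for each pair of indecomposables extract the precise ``incompatibility'' condition on their dimension vectors that forbids them from being simultaneously stable --- this is exactly where Juteau's $D$- and $E$-type obstructions surface; and (iii) unwind the definition of maximal stable category attached to Conjecture \ref{conj1} into the claim that some chamber, equivalently some maximal achievable $S(\Theta)$, already ``saturates'' $\mathbf{Rep}_k(Q)$ in the required sense, whereas no proper abelian length category containing $\mathbf{Rep}_k(Q)$ admits an analogous saturating configuration.

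\textbf{The main obstacles.} I expect two genuinely hard points. The first is the absence of a uniform construction: in type $A_n$ the explicit combinatorial weight system settles everything at once, but in $D$ and $E$ one seems pushed into cases. Type $D_n$ might still yield to a family of weight systems built from an $A_{n-1}$ sub-quiver together with a careful analysis of the trivalent vertex, but $E_6$, $E_7$ and especially $E_8$ --- with their large Auslander--Reiten quivers and the severest obstructions --- may demand substantial computation, and extracting a conceptual reason from it would be the real crux. The second, and I believe deeper, difficulty is the maximality clause: it requires ruling out that $\mathbf{Rep}_k(Q)$ embeds into a strictly larger abelian length category that is still a stable category --- not just over-categories of the form $\mathbf{Rep}_k(Q')$. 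Here I would try to exploit Auslander--Reiten theory and the rigidity of Dynkin module categories (every indecomposable rigid, $K_0$ of finite rank with unimodular Euler form) to argue that any such over-category must introduce a new indecomposable violating the incompatibility relation from step (ii); but handling ``all over-categories'' rather than a tractable list is the step I am least sure how to close, and I would regard it as the principal gap in this plan.
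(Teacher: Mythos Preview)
The paper does not prove this statement: it is stated as a conjecture (Conjecture~\ref{conj}), and the paper explicitly says in the introduction that the authors do not know whether it holds for all Dynkin quivers. Only the $A_n$ case is established (Theorem~\ref{1.3.5}). So there is no ``paper's own proof'' to compare against, and any proposal here is an attack on an open problem.

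More seriously, your plan rests on a misreading of the definition. In this paper, an abelian category $\mathcal{A}$ is called a \emph{maximal stable category} if there exist additive functions $w$ and $r$ on $\mathcal{A}$ such that the $(w,r)$-stable objects are \emph{exactly} the indecomposable ones. The word ``maximal'' refers to the stable subcategory being as large as possible inside $\mathcal{A}$; it has nothing to do with $\mathcal{A}$ being maximal among abelian categories under embedding. Consequently your entire ``second obstacle'' --- ruling out strictly larger abelian length categories containing $\mathbf{Rep}_k(Q)$ --- is addressing a claim the conjecture does not make, and the machinery you propose for it (rigidity, unimodular Euler form, over-categories) is irrelevant. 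What the modified conjecture actually asks, and what distinguishes it from Reineke's original, is whether allowing an \emph{arbitrary} additive rank function $r$ (rather than the fixed $r(X)=\sum_i \dim X_i$) lets one circumvent Juteau's $D$- and $E$-type counterexamples; your proposal never engages with this extra freedom.

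Two further gaps in the part that is on-topic. First, the reflection-functor reduction is not as clean as you assert: reflection functors change dimension vectors by a simple reflection, and there is no evident reason a slope function $w/r$ should transport to a slope function of the same shape on the reflected quiver --- indeed, the paper's intrinsic weight system for $A_n$ depends heavily on the orientation. Second, your step (iii) speaks of a chamber whose $S(\Theta)$ ``saturates'' $\mathbf{Rep}_k(Q)$ ``in the required sense''; once the definition is read correctly, the required sense is $S(\Theta)=\mathrm{Ind}(Q)$ on the nose, which is precisely what Juteau's computations exclude for the standard rank function, so this step collapses unless you genuinely exploit the more general $r$.
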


  A natural extension is to consider the Reineke-type conjecture for certain triangulated categories with Bridgeland stability conditions\footnote{Very recently, we note that the authors of \cite{KOT} prove that for the derived category $D^b(Q)$ of a Dynkin quiver $Q$, there exists a Bridgeland stability condition $\sigma$ such that for an object $E\in D^b(Q)$ the following are equivalent: i) $E$ is indecomposable, ii) $E$ is exceptional, iii) $E$ is $\sigma$-stable. }.

\section{Preliminaries}
In this section, we collect some basic materials  of  quiver theory (for more details, see \cite{1}). Throughout the paper,  $k$ is assumed to be a fixed algebraically closed field.

\begin{definition}
\begin{enumerate}
  \item A {\em quiver} $Q=(Q_0,Q_1,s,t)$ is a $4$-tuple, where
\begin{itemize}
  \item $ Q_0$ and $Q_1$ are finite sets of {\em vertices} and {\em arrows} respectively,
  \item $ s,t: Q_1\to Q_0$ map each arrow $a\in Q_1$ to its {\em starting vertex} $s(a)$ and {\em terminal vertex} $t(a)$.
\end{itemize}
  \item Let $Q=(Q_0, Q_1)$ be a quiver, $Q'=(Q_0', Q_1')$ is called a {\em subquiver} if
\begin{itemize}
  \item  $Q_0'\subset Q_0$ and $Q_1'\subset Q_1$,
  \item $s(a), t(a)\in Q_0'$ for all $a\in Q_1'$.
\end{itemize}
In particular, a subquiver $Q'$ is called a {\em full subquiver} if furthermore, $a\in Q_1'$ for all $a\in Q_1$ satisfying  $s(a)\in Q_0'$.
  \item  A  {\em path} $\gamma$ of a quiver $Q$ is a sequence $a_1\cdots a_n (n\geq 1)$ of arrows that satisfies $s(a_{i+1})=t(a_i)$ for $1\leq i\leq n-1$, and the starting vertex of $a_1$ and terminal vertex of $a_n$ are called starting vertex  and terminal vertex of $\gamma$, respectively.
\end{enumerate}
\end{definition}

\begin{definition}\begin{enumerate}
                    \item Let $Q=(Q_0, Q_1)$ be a quiver, a {\em representation} of $Q$ over $k$ is a pair $X=\{(X_i)_{i\in Q_0},(X_a)_{a\in Q_1}\}$, where
$(X_i)_{i\in Q_0}$ is a family of finite dimensional  $k$-vector spaces and $(X_a)_{a\in Q_1}$ a family of $k$-linear maps associated to all arrows, i.e.
$X_a: X_{s(a)}\to X_{t(a)}$. Define  $d=(d_i)_{i\in Q_0}\in \mathbb{Z}^{|Q_0|}$ for $d_i=\dim_k X_i$, and call it the {\em dimension vector} of $X$.
Let $X, Y$ be two $k$-representations of $Q$, a {\em morphism} $u: X\to Y$ is a collection of linear maps $u_i: X_i\to Y_i$ for all $i\in Q_0$ such that for each arrow $a\in Q_1$, the following diagram commutes:
$$\begin{CD}
  X_{s(a)} @>X_a>> X_{t(a)} \\
  @V u_{s(a)} VV @V u_{t(a)} VV  \\
  Y_{s(a)} @>Y_a>> Y_{t(a)}.
\end{CD} $$
 We say the morphism $u$ is an {\em isomorphism} if moreover each $u_i$ is an isomorphism, and denote it as $X\cong Y$. We denote by $\mathrm{\mathbf{Rep}}_k(Q)$ the category of representation of $Q$ over $k$.
                  \item The {\em direct sum} $W=X\oplus Y$ of two representations $X$ and $Y$ of $Q$ is defined by the pair
$$W=\{(W_i)_{i\in Q_0},(W_a)_{a\in Q_1}\}=\{(X_i\oplus Y_i)_{i\in Q_0},(X_a\oplus Y_a)_{a\in Q_1}\},$$
where each linear map is given by
$$W_a=X_a\oplus Y_a: X_{s(a)}\oplus Y_{s(a)}\to X_{t(a)}\oplus Y_{t(a)}.$$
A representation $W$ of $Q$ is said to be {\em decomposable} if there exist non-zero representations $X$ and $Y$ such that $W\cong X\oplus Y$, otherwise it is said to be {\em indecomposable}.
\item Let $X$ and $Y$ be two representations of $Q$. $X$ is said to be a {\em subrepresentation} $Y$ if $X_i\subset Y_i$ for all $i\in Q_0$ and $X_a=Y_a|_{X_{s(a)}}: X_{s(a)}\to X_{t(a)}$ for all $a\in Q_1$. A representation is called {\em simple} if it has no proper non-zero subrepresentations, and called {\em semisimple} if it is the direct sum of simple representations.
    \item  We say a representation $X$ of $Q$ is  {\em thin}  if
$\dim_k(X_i)\leq1\quad \text{for all}\,\,\, i\in Q_0$,
that is, if each linear space $X_i$ is either $0$ or $k$.
    \end{enumerate}
\end{definition}

Given a quiver $Q$, an important aim of representation theory is to classify all representations and all morphisms up to isomorphism.  Krull-Schmidt theorem  makes this classification problem easier, it states that every representation of a given quiver can be uniquely decomposed into indecomposable representations up to ordering, so we only need to classify the indecomposable representations. A quiver $Q$ is said to be of finite type if it has finitely many isomorphism classes of indecomposable representations.
Gabriel's classification theorem states that for  a connected quiver $Q$ without oriented cycles the following are equivalent
\begin{itemize}
  \item $Q$ is of finite type,
  \item the underlying graph of $Q$ is  a simply laced Dynkin diagram, namely one of the followings
  $$\begin{dynkin}{$A_n:$}
\foreach \x in {1,...,4}
{
        \dynkindot{\x}{0}
    }
    \dynkinline{1}{0}{2}{0};
    \dynkindots{2}{0}{3}{0};
    \dynkinline{3}{0}{4}{0};
  \end{dynkin}
 $$

  $$\begin{dynkin}{$D_n:$}
    \foreach \x in {2,...,5}
    {
        \dynkindot{\x}{0}
    }
    \dynkindot{1}{.9}
    \dynkindot{1}{-.9}
    \dynkindot{2}{0}
    \dynkinline{1}{.9}{2}{0}
    \dynkinline{1}{-.9}{2}{0}
    \dynkinline{2}{0}{3}{0}
    \dynkindots{3}{0}{4}{0}
    \dynkinline{4}{0}{5}{0}
  \end{dynkin}$$

  $$\begin{dynkin}{$E_6:$}
    \foreach \x in {1,...,5}
    {
        \dynkindot{\x}{0}
    }
    \dynkindot{3}{1}
    \dynkinline{1}{0}{2}{0}
    \dynkinline{2}{0}{3}{0}
    \dynkinline{3}{1}{3}{0}
    \dynkinline{3}{0}{4}{0}
    \dynkinline{4}{0}{5}{0}
  \end{dynkin}$$

  $$
  \begin{dynkin}{$E_7:$}
    \foreach \x in {1,...,6}
    {
        \dynkindot{\x}{0}
    }
    \dynkindot{3}{1}
    \dynkinline{1}{0}{2}{0}
    \dynkinline{2}{0}{3}{0}
    \dynkinline{3}{1}{3}{0}
    \dynkinline{3}{0}{4}{0}
    \dynkinline{4}{0}{5}{0}
    \dynkinline{5}{0}{6}{0}
  \end{dynkin}$$

  $$
  \begin{dynkin}{$E_8:$}
    \foreach \x in {1,...,7}
    {
        \dynkindot{\x}{0}
    }
    \dynkindot{3}{1}
    \dynkinline{1}{0}{2}{0}
    \dynkinline{2}{0}{3}{0}
    \dynkinline{3}{1}{3}{0}
    \dynkinline{3}{0}{4}{0}
    \dynkinline{4}{0}{5}{0}
    \dynkinline{5}{0}{6}{0}
    \dynkinline{6}{0}{7}{0}
  \end{dynkin}$$
  \item the quadratic form $$q_Q(\alpha)=\sum\limits_{i\in Q_0}\alpha_i^2-\sum\limits_{a\in Q_1}\alpha_{s(a)}\alpha_{t(a)},$$
  where $\alpha\in \mathbb{Z}^{|Q_0|}$, is positive definite.
\end{itemize}
Moreover one has  the following bijective correspondence:
$$\begin{array}{c}
  \boxed{\mathrm{isomorphism \  classes \ of\  indecomposable\  representations}\  X } \\
  \updownarrow \\
  \boxed{\mathrm{positives\ roots \ of \ the\  quadratic\  form} \ q_Q } \\
  \updownarrow \\
  \boxed{\mathrm{noninitial \ cluster\ variables}\ c_X}.
\end{array}$$

For  a quiver $Q$, we  express the underlying graph $\Gamma_Q$ as a binary set $\Gamma_Q=:\{(Q_0,Q_1')\}$ of vertices and edges, where $Q_1'$ is obtained by taking all arrows in $Q_1$ as edges. For any edge $l\in Q_1'$, it corresponds to a unique arrow $a\in Q_1$, we denote $s(l)=s(a)$ and $t(l)=t(a)$ the starting vertex and terminal vertex of $l$, respectively.

\begin{definition}
The {\em support} of $X$ is a subset of $\Gamma_Q$ consisting of all vertices $i$ with the assigning linear space $X_i\neq0$ and all edges connecting these vertices, that is:
$$\text{supp}(X):=\{(\tilde{Q}_0,\tilde{Q}_1)| i\in \tilde{Q}_0\,\,\text{if}\,\,X_i\neq0; l\in\tilde{Q}_1\,\,\text{if}\,\,s(l),t(l)\in\tilde{Q}_0\}\subset\Gamma_Q.$$
The {\em support quiver} $\text{supp}_X(Q)$ is given by recovering the arrows of all edges in $\text{supp}(X)$, that is, given by the vertices $i$ with $X_i\neq$ and the arrows $a$ with $X_a\neq0$.
\end{definition}
The following facts are very obvious.
\begin{fact} Let $Q$ be a quiver, then
\begin{enumerate}
  \item If $X$ is an indecomposable representation, then its support quiver $\text{supp}_X(Q)$ is connected.
  \item If two representations $X$ and $Y$ are isomorphic, then $\text{supp}_X(Q)=\text{supp}_Y(Q)$.
  \item Let $X$  be a thin representation of $Q$. Then $X$ is indecomposable if and only if the support quiver $\text{supp}_X(Q)$ is connected.
\end{enumerate}

\end{fact}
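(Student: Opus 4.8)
The plan is to reduce each assertion to an explicit manipulation of direct sums; none of the three requires anything beyond careful bookkeeping. For (1) I would argue by contraposition: assume $\text{supp}_X(Q)$ is disconnected and write its vertex set as a disjoint union $V_1\sqcup V_2$ of two nonempty sets with no arrow $a$ satisfying $X_a\neq 0$ joining $V_1$ to $V_2$. I would then set $X^{(k)}_i=X_i$ for $i\in V_k$ and $X^{(k)}_i=0$ otherwise, and $X^{(k)}_a=X_a$ when $s(a),t(a)\in V_k$ and $X^{(k)}_a=0$ otherwise, and check that $X\cong X^{(1)}\oplus X^{(2)}$. The only identity that needs thought is $X_a=X^{(1)}_a\oplus X^{(2)}_a$ for every $a$; it is immediate unless $s(a)$ and $t(a)$ lie in different $V_k$ with both $X_{s(a)},X_{t(a)}\neq0$, and in that case it holds because $X_a=0$ by the choice of partition. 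Since each $V_k$ contains a vertex with nonzero space, both summands are nonzero, contradicting indecomposability.

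Statement (2) is essentially immediate: if $u\colon X\to Y$ is an isomorphism then each $u_i$ is invertible, so $X_i=0\iff Y_i=0$; and the commuting squares give $Y_a=u_{t(a)}\circ X_a\circ u_{s(a)}^{-1}$, so $X_a\neq0\iff Y_a\neq0$. Hence the two support quivers have the same vertices and arrows. For (3), the forward implication is the special case of (1) in which $X$ is thin, so the work is in the converse. I would assume $X$ is thin with $\text{supp}_X(Q)$ connected, suppose $X\cong X'\oplus X''$, and use thinness to see that each support vertex $i$ has exactly one of $X'_i,X''_i$ equal to $k$ and the other $0$; this splits the support vertices as $V'\sqcup V''$. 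The crucial point is that any arrow $a$ with $X_a\neq0$ keeps both endpoints in the same part: both endpoints then carry $k$, and if they were separated, $X'_a$ and $X''_a$ would each be a linear map with zero source or zero target, forcing $X_a=X'_a\oplus X''_a=0$. So a genuine splitting would disconnect $\text{supp}_X(Q)$; connectedness then forces one of $V',V''$, say $V''$, to be empty, whence $X''=0$ and $X$ is indecomposable.

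The arguments are routine, and the only real subtlety — the main obstacle, such as it is — is to keep straight the two closely related notions introduced just above: $\text{supp}(X)$ as a subgraph of $\Gamma_Q$ retaining every edge between support vertices, versus the support quiver $\text{supp}_X(Q)$, whose arrows are only those $a$ with $X_a\neq0$. All of the direct-sum obstructions above are insensitive to arrows $a$ with $X_a=0$, so throughout (1) and (3) it is connectedness of the \emph{latter} quiver that must be used; one should phrase the partitions accordingly and not accidentally invoke the (finer) edge set of $\text{supp}(X)$.
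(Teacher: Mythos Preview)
Your proof is correct in all three parts. The paper itself offers no argument here --- it simply declares the facts ``very obvious'' and moves on --- so there is no alternative approach to compare against; your direct-sum bookkeeping is exactly the standard verification one would expect.

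Your closing remark about the two notions is well placed and worth emphasizing: the paper's own definition of $\text{supp}_X(Q)$ is actually internally inconsistent, since ``recovering the arrows of all edges in $\text{supp}(X)$'' yields every arrow between support vertices, whereas the clause ``that is, \dots the arrows $a$ with $X_a\neq0$'' yields a generally smaller arrow set. You are right that only the second reading makes part (3) true (take $Q=\bullet\to\bullet$ with $X=(k,k)$ and zero map for a counterexample under the first reading), and your proof correctly works with that reading throughout. For the indecomposable thin representations $I_{p,q}$ used later in the paper the two readings happen to coincide, so the discrepancy is harmless there, but it is a genuine ambiguity in the definition as stated.
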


\begin{definition}[\cite{j}]
\begin{enumerate}
                    \item  Let $\mathcal{A}$ be a category, and  $w,r$ be two functions on $\mathcal{A}$, called {\em weight function} and {\em rank function} respectively, such that $r(X)\neq 0$ for any nonzero object in $\mathcal{A}$. A object  $X\in \mathcal{A}$ is called {\em $(w,r)$-stable} (respectively, {\em $(w,r)$-semistable}) if for any nonzero subobject $U$ of $X$, we have $\mu(U)<\mu(X)$ (respectively, $\mu(U)\leq\mu(X)$), where the slope function $\mu(X)$ is defined by $\mu(X)=\frac{w(X)}{r(X)}$.
                        \item  Let $w,r$ be  weight  function and rank function on a category $\mathcal{A}$ respectively, all $(w,r)$-stable (respectively, $(w,r)$-semistable) objects of $\mathcal{A}$ form a full subcategory of $\mathcal{A}$, called $(w,r)$-stable (respectively, $(w,r)$-semistable) subcategory. Two pairs $(w,r)$ and $(w^\prime,r^\prime)$ is called {\em stable-equivalent} (respectively, {\em semistable-equivalent}) if they induce the same stable (respectively, semistable) subcategories.
                    \item Let $\mathcal{A}$ be  an abelian category, if there exist additive weight function $w$ and rank function $r$ on $\mathcal{A}$ such that $(w,r)$-stable subcategory consists of all indecomposable objects, we call $\mathcal{A}$ be a \emph{maximal   stable category}.
                  \end{enumerate}

\end{definition}

\section{Reineke's Conjecture for Quivers of $A_n$-Type}

\begin{conjecture}[Modified Reineke's conjecture]\label{conj}
Let $Q$ be a  Dynkin quiver, then the abelian category  $\mathrm{\mathbf{Rep}}_k(Q)$ is a maximal stable category.
\end{conjecture}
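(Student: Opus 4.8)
The plan is to establish the conjecture for a quiver $Q$ whose underlying graph is of type $A_n$; this is the case the paper settles, and it recovers Reineke's original statement there (which already fails in types $D$ and $E$ by Juteau's computations). Take for the rank function the total dimension $r(X)=\sum_{i\in Q_0}\dim_k X_i$, which is additive and nonzero on nonzero objects, and look for an additive weight function $w(X)=\sum_{i\in Q_0}\theta_i\dim_k X_i$ with $\theta=(\theta_i)\in\mathbb{Z}^{n}$ to be chosen. One direction holds for \emph{any} choice of $(w,r)$: if $X\cong Y\oplus Z$ with $Y,Z\neq 0$, then $Y$ is a nonzero subobject, $r(X)=r(Y)+r(Z)$ and $w(X)=w(Y)+w(Z)$, so $\mu(X)$ lies between $\mu(Y)$ and $\mu(Z)$ and cannot exceed both strictly; hence a $(w,r)$-stable object is indecomposable. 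The whole content is therefore to choose $\theta$ so that \emph{every} indecomposable object of $\mathrm{\mathbf{Rep}}_k(Q)$ is $(w,r)$-stable.

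I would then translate this into combinatorics. By Gabriel's theorem the indecomposables of an $A_n$-quiver are the thin ``interval'' modules $M_{[i,j]}$ ($1\le i\le j\le n$): $\dim_k(M_{[i,j]})_v=1$ for $i\le v\le j$ and $0$ otherwise, with all structure maps along arrows inside $[i,j]$ equal to $\mathrm{id}_k$. A subrepresentation of $M_{[i,j]}$ is again thin and supported on a subset $S\subseteq[i,j]$ that is closed under the arrows lying inside $[i,j]$ (if the source of such an arrow is in $S$, so is its target) --- equivalently, $S$ is an up-set for the reachability order induced on $[i,j]$ by the orientation. Since $w$ and $r$ are additive, $M_{[i,j]}$ is $(w,r)$-stable if and only if
\[
\frac{1}{|S|}\sum_{v\in S}\theta_v<\frac{1}{j-i+1}\sum_{v\in[i,j]}\theta_v
\]
for every nonempty forward-closed $S\subsetneq[i,j]$. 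So the problem becomes: produce a single weight vector $\theta$ for which, inside every interval $[i,j]$, the $\theta$-average over the whole interval strictly dominates the $\theta$-average over every proper nonempty forward-closed subset.

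For the construction I would begin with a height function that strictly decreases along each arrow --- on the equioriented $A_n$ simply $\theta_v=-v$ works, since every proper forward-closed subset of $[i,j]$ is then a proper suffix $[m,j]$ all of whose entries lie below those of $[i,m-1]$, forcing its average down. For a general orientation this is insufficient: the turning points split the path into maximal equioriented runs meeting at sources (local maxima of the height) and sinks (local minima), and each sink must be dropped into a potential well deep enough, with the well depths and the run slopes calibrated so that the inequality survives in all intervals simultaneously. I would pin the recipe down by first imposing the two extremal families of inequalities --- for $\{v\}$ with $v$ a sink of $[i,j]$ (forcing $\theta_v$ below the $\theta$-average of every interval containing $v$) and for $[i,j]\setminus\{v\}$ with $v$ a source (forcing $\theta_v$ above it) --- and then reducing an arbitrary forward-closed $S$ to its connected interval components and, on each, to one of these extremal cases by a telescoping/convexity estimate, establishing the ``master inequality'' by induction on $j-i$ (peeling off one run, or one turning point, at a time). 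For the semi-invariant reinterpretation promised in the abstract, one passes through King's criterion and exhibits, for each interval module, an explicit product of determinantal (Schofield) semi-invariants of the appropriate weight that is nonzero on $M_{[i,j]}$.

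The main obstacle is precisely this calibration: one $\theta$ must make all $\binom{n+1}{2}$ interval modules stable at once, and the inequalities genuinely interact --- deepening a sink's well to rescue one interval can destroy the singleton-sink inequality in a longer interval that contains a still deeper sink, so naive choices (for instance all well depths equal) fail. Proving that some explicit, suitably graded choice of slopes and depths does verify the master inequality for every interval and every forward-closed subset is where essentially all of the work lies; by contrast the reduction to the combinatorial statement and the ``stable $\Rightarrow$ indecomposable'' direction are routine.
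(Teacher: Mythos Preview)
Your reduction is correct and matches the paper: for $A_n$ the indecomposables are the interval modules, their subrepresentations are the thin representations supported on forward-closed subsets of the interval, and stability becomes the family of averaging inequalities you write down. The direction ``stable $\Rightarrow$ indecomposable'' is likewise dispatched the same way. Where your proposal diverges from the paper --- and where it has a genuine gap --- is that you never actually produce the weight vector. You describe the qualitative shape (decrease along arrows, deep wells at sinks, graded slopes) and you outline a verification scheme (reduce to singleton-sink and complement-of-source extremal cases, telescope, induct on $j-i$), but you yourself flag that the calibration is ``where essentially all of the work lies,'' and none of it is carried out. Without an explicit $\theta$ and a mechanism that makes the simultaneous inequalities checkable, the proposal is a plan, not a proof.

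The paper's route is quite different from your sketch and is worth contrasting. It writes down a closed-form ``intrinsic'' weight system depending only on the local orientation type of each vertex (source, sink, or pass-through in one of two directions) and its position: $\theta_i=\pm(l_i+r_i+2l_ir_i)$ at sources/sinks and $\theta_i=\pm(r_i-l_i)$ at pass-through vertices, where $l_i,r_i$ count vertices to the left and right. The key device that makes the verification tractable is an identity expressing each $\theta_i$ as a sum, over all connected subquivers $Q'$ containing $i$, of the signed degree $\#\{a\in Q'_1:s(a)=i\}-\#\{a\in Q'_1:t(a)=i\}$; since these signed degrees sum to zero over the vertices of any $Q'$, one gets $\sum_i\theta_i=0$ for free and, more importantly, a way to compute the total weight over any sub-interval by separating an ``independent'' contribution (zero) from an ``added'' contribution coming from subquivers that straddle the boundary. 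The stability inequalities then reduce to short explicit formulas in $l_{Q^X},r_{Q^X},|Q_0^i|$, checked by a finite case analysis on how the support of the subrepresentation sits inside the support of the indecomposable and how the ambient arrows meet its ends. There is no induction on interval length and no reduction to extremal subobjects; the subquiver-sum identity does the heavy lifting your ``calibration'' would have had to do.
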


In this section, we confirm the above conjecture for quivers of type $A_n$. Namely, we will prove the following main theorem.
\begin{theorem}\label{1.3.5}
If $Q$ is a  quiver of type $A_n$, then there exists a weight system  $\Theta=(\theta_i)_{i\in Q_0}\in \mathbb{Z}^{|Q_0|}$ such that the stable representations with respect to the weight function $w(X)=\sum_{i\in Q_0}\theta_i\dim X_i$
and rank function $r(X)=\sum_{i\in Q_0}\dim X_i$ are precisely the indecomposables, namely  $\mathrm{\mathbf{Rep}}_k(Q)$ is a maximal stable category.
\end{theorem}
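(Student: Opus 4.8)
The plan is to reduce to showing that each indecomposable is stable, translate that into numerical inequalities, write down $\Theta$ explicitly, and verify those inequalities.

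First, since $w$ and $r$ are additive, $\mu(X\oplus Y)$ is an $r$-weighted mean of $\mu(X)$ and $\mu(Y)$, so a stable object is automatically indecomposable; thus ``stable $\Rightarrow$ indecomposable'' holds for \emph{any} weight system, and it suffices to produce one $\Theta$ for which every indecomposable is stable. By Gabriel's theorem the indecomposables of an $A_n$-quiver $Q$ are the interval representations $M_{[a,b]}$, $1\le a\le b\le n$: $k$ at each vertex of $[a,b]$, $0$ elsewhere, identity maps along the arrows inside $[a,b]$; all are thin. A subrepresentation of $M_{[a,b]}$ has each component $0$ or $k$ and, by commutativity with the identity structure maps, its support $S$ must be closed under following arrows inside $[a,b]$ --- an order ideal of the ``fence'' poset that $Q|_{[a,b]}$ imposes on $[a,b]$ --- and conversely every such $S$ yields a subrepresentation $M_S$ with $\dim M_S$ the indicator of $S$. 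Hence $M_{[a,b]}$ is stable if and only if $\frac{1}{|S|}\sum_{i\in S}\theta_i<\frac{1}{b-a+1}\sum_{i\in[a,b]}\theta_i$ for every proper non-empty order ideal $S\subsetneq[a,b]$ (equivalently, every proper non-zero quotient has strictly larger slope).

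For the construction I would let $\theta_i$ be the signed count of the sub-intervals in which $i$ is extremal: $\theta_i=\#\{[a,b]:a<b,\ a\le i\le b,\ i\text{ a source of }Q|_{[a,b]}\}-\#\{[a,b]:a<b,\ a\le i\le b,\ i\text{ a sink of }Q|_{[a,b]}\}$. A direct count yields, with $g(i)=i(n+1-i)$: $\theta_i=g(i)-1$ if $i$ is a source of $Q$, $\theta_i=-(g(i)-1)$ if $i$ is a sink of $Q$, and $\theta_i=\pm(n+1-2i)$ otherwise, the sign being $+$ if $i$'s left arrow comes in and its right arrow goes out and $-$ in the reverse case; in particular $\Theta\in\mathbb{Z}^{|Q_0|}$. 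The two features of $\Theta$ I would exploit are: $\theta$ \emph{strictly decreases along every arrow} (so sinks are strict local minima, sources strict local maxima, and along each maximal monotone run of $Q$ the $\theta_i$ are strictly monotone); and $w=\sum_{|J|\ge 2}w_J$, where $w_J$ gives value $+1$ to the sources of the interval $J$, $-1$ to its sinks, and $0$ elsewhere.

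The verification is the technical heart, and I expect it to be the main obstacle. I would induct on $b-a$, the case $b=a$ being vacuous. For a proper non-empty order ideal $S\subsetneq[a,b]$, its complement is a non-empty up-set and so contains a source $t$ of $Q|_{[a,b]}$. When $t$ is interior, $S$ decomposes as an order ideal of the disjoint union $Q|_{[a,t-1]}\sqcup Q|_{[t+1,b]}$, so $S=S_1\sqcup S_2$ with each $S_j$ an order ideal of the shorter interval it lives in; applying the inductive hypothesis to the proper pieces and amalgamating the resulting averages reduces the desired inequality to $\theta_t>\frac{1}{b-a+1}\sum_{i\in[a,b]}\theta_i$. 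When $t$ is an endpoint, the inductive hypothesis for the shorter interval together with the same estimate again suffices. Thus everything comes down to the key inequality: \emph{for every sub-interval $I$ of $Q$ and every source $t$ of $Q|_I$ one has $\theta_t>\frac{1}{|I|}\sum_{i\in I}\theta_i$} (and, dually, $\theta_s<\frac{1}{|I|}\sum_{i\in I}\theta_i$ for every sink $s$ of $Q|_I$, which is the same statement applied to $Q^{\mathrm{op}}$). Proving this for the explicit $\Theta$ above is where the combinatorics really bites: $t$ may fail to be a source of $Q$ --- so $\theta_t$ can be as small as $n+1-2t$ --- while $I$ may contain sinks of $Q$ carrying large negative weight, so one must show that the positive contributions, governed by the concave, centrally-peaked function $g$, always dominate $\sum_{i\in I}(\theta_t-\theta_i)$. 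I would do this by splitting $I$ into its runs and valleys and estimating run by run, using strict monotonicity of $\theta$ along runs; the careful bookkeeping needed to keep every amalgamated inequality \emph{strict} is the crux, and it is the reason purely symmetric or purely linear weight systems fail. (The semi-invariant reinterpretation promised in the abstract --- matching each $w_J$ with the weight of a determinantal semi-invariant on the representation space of $Q$ --- is logically independent of Theorem \ref{1.3.5} and I would develop it separately afterwards.)
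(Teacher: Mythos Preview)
Your weight system is not the paper's. You assign $\pm 1$ to each source/sink of every sub-interval, while the paper assigns $\pm 2$ to \emph{interior} sources/sinks and $\pm 1$ to endpoint ones; the two agree on pass-through vertices but differ by $l_ir_i$ at genuine sources and sinks of $Q$ (so e.g.\ your weights need not sum to zero). That in itself is harmless --- your $\Theta$ may well work --- but your proof strategy does not.

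The inductive reduction to the ``key inequality'' $\theta_t>\mu(I)$ is invalid. From $\mu(S_1)\le\mu([a,t-1])$, $\mu(S_2)\le\mu([t+1,b])$ and $\theta_t>\mu([a,b])$ you cannot conclude $\mu(S_1\sqcup S_2)<\mu([a,b])$: the left side is a $(|S_1|,|S_2|)$-weighted mean of $\mu(S_1),\mu(S_2)$, whereas what $\theta_t>\mu([a,b])$ controls is the $(t-a,\,b-t)$-weighted mean $\mu([a,b]\setminus\{t\})$, and when $S_1=[a,t-1]$ with $\mu([a,t-1])>\mu([a,b])$ the two can be on opposite sides of $\mu([a,b])$. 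Worse, the key inequality, even together with its dual for sinks, is strictly weaker than stability of all $I_{[a,b]}$. On $Q\colon 1\to 2\leftarrow 3\to 4$ with $\theta=(10,4,10,0)$ one checks directly that $\theta_t>\mu(I)$ for every source $t$ of every sub-interval $I$ \emph{and} $\theta_s<\mu(I)$ for every sink $s$; yet the subrepresentation supported on $\{1,2\}$ of $I_{[1,4]}$ has slope $7>6=\mu([1,4])$. (Tracing your induction here: $t=3$, $S_1=[1,2]$ is the full left piece, $S_2=\emptyset$, and the hypothesis gives nothing.) So ``everything comes down to the key inequality'' is simply false; the difficulty is not in keeping amalgamated inequalities strict but in the amalgamation itself.

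The paper does not attempt any such reduction. It fixes its intrinsic $\Theta$, writes the weight of any connected piece $Q'$ of $\mathrm{supp}(X')$ as $\theta_{\mathrm{ind}}(Q')+\theta_{\mathrm{add}}(Q')$ --- the first term being the intrinsic-weight sum of $Q'$ viewed as a standalone $A$-quiver (hence zero), the second the explicit correction from sub-intervals overlapping $Q'$ and its exterior --- and then compares $\mu(X')$ with $\mu(X)$ by a direct case analysis on how the endpoints of each $Q^i$ and of $\mathrm{supp}(X)$ sit inside $Q$ (arrows in/out on each side). That closed-form bookkeeping, not an inductive shortcut, is what carries the argument.
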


\subsection{Intrinsic Weight System}
Let $Q$ be a quiver of type $A_n$,
we put it horizontally  and  fix a reference  direction from left to right so that assign numbers $1,\cdots,n$ to the vertices of $Q$ along the reference  direction, then  we can classify all vertices  into the following four types according to the  directions of arrows attached to them:

$\bullet$ a vertex $i\in Q_0$ is called of type I if it is only as the starting vertex of arrows linking to it;

$\bullet$ a vertex $i\in Q_0$ is called of type II if it is only as the terminal vertex of arrows linking to it;

$\bullet$ a vertex $i\in Q_0$ is called of type III if there is  a path with the reference  direction such that $i$ is a vertex but neither a staring nor a terminal one of that path;

$\bullet$ a vertex $i\in Q_0$ is called of type IV if  there is  a path with the  direction opposite to   the reference  direction such that $i$ is a vertex but neither a staring nor a terminal one of that path.

Now we define a weight system  $\Theta=\{\theta_i\}_{i\in Q_0}$ according to the  type of each vertex as follows:
\begin{equation}\label{2.2}
\theta_i=\left\{
             \begin{array}{ll}
              l_i+r_i+2l_ir_i, & \hbox{$i$ \textrm{ is a vertex of type I};} \\
               -l_i-r_i-2l_ir_i, & \hbox{$i$ \textrm{ is a vertex of type II};} \\
r_i-l_i, & \hbox{$i$ \textrm{ is a vertex of type III};} \\
l_i-r_i, & \hbox{$i$ \textrm{ is a vertex of type IV}} \\
             \end{array}
           \right.
\end{equation}
where $l_i$ and $r_i$ stands for the  number of vertices on the  left and right of the vertex $i$, respectively.
Such weight system is called the \emph{intrinsic weight system},
and the corresponding  weight function  $w_\Theta(X)=\sum_{i\in Q_0}\theta_i\dim X_i$ for a representation $X$ of $Q$ is called  the \emph{intrinsic weight function}.

\begin{example}\label{A7}
The following quiver of $A_n$-type
\begin{equation}\label{1}
  \underset{1}{\bullet}\longrightarrow\underset{2}{\bullet}\longrightarrow\underset{3}{\bullet}\longrightarrow
  \underset{4}{\bullet}\longleftarrow\underset{5}{\bullet}\longleftarrow\underset{6}{\bullet}\longrightarrow
  \underset{7}{\bullet},
\end{equation}
has four  types  of vertices:
 \begin{equation*}
   \left\{\begin{split}
     &I: \qquad\quad 1, 6,\\
     &II: \qquad\,\,\, 4, 7,\\
    &III: \qquad 2, 3,\\
     &IV: \qquad\,\, 5,
   \end{split}
   \right.
 \end{equation*}
Then the intrinsic weight system $\Theta$ is given by  $\theta_1=6$, $\theta_2=4$, $\theta_3=2$, $\theta_4=-24$, $\theta_5=2$, $\theta_6=16$, $\theta_7=-6$.
\end{example}

\begin{lemma}\label{total weight}
Let $Q=\{Q_0,Q_1\}$ be a quiver of $A_n$-type with  the intrinsic weight system $\Theta=\{\theta_i\}_{i\in Q_0}$. Then
\begin{enumerate}
  \item along any  path, the weights  at the vertices contained in the path  decrease.
  \item the sum $\sum\limits_{i\in Q_0}\theta_i$ of  weights at all vertices is exactly zero.
\end{enumerate}

\end{lemma}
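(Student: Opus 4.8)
I treat the two parts separately. For part (1) the first task is to describe an arbitrary path $\gamma=a_1\cdots a_m$: writing $v_0,\dots,v_m$ for its vertices, with $v_{j-1}=s(a_j)$ and $v_j=t(a_j)$, I claim the integer sequence $(v_j)$ is strictly monotone in unit steps. Indeed $v_{j-1}$ and $v_j$ are the endpoints of $a_j$ in $A_n$, so $|v_j-v_{j-1}|=1$; and an interior strict extremum, say $v_{j-1}<v_j>v_{j+1}$, would force $v_{j-1}=v_{j+1}=v_j-1$, so that $a_j$ and $a_{j+1}$ both live on the edge $\{v_j-1,v_j\}$ — impossible since in $A_n$ that edge carries a single arrow (and in any case this is incompatible with $s(a_{j+1})=t(a_j)$); the reversed inequality is excluded the same way. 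The left--right reflection of the picture fixes sources and sinks, interchanges types III and IV and swaps $l_i\leftrightarrow r_i$, under which the weight $\theta_i$ of every vertex is unchanged, while it turns increasing paths into decreasing ones; so I may assume $v_j=v_0+j$. Then every arrow of $\gamma$ points in the reference direction, so each interior vertex $v_1,\dots,v_{m-1}$ has an arrow entering from the left and one leaving to the right and is of type III, whereas $v_0$ is of type I or III and $v_m$ of type II or III. Now use $l_i=i-1$, $r_i=n-i$: along the type-III stretch $\theta_{v_j}=r_{v_j}-l_{v_j}=n+1-2v_j$, which strictly decreases in $j$; for the at most two remaining transitions I use that a type-I weight equals $(n-1)+2l_ir_i\ge n-1>0$ while a type-II weight is $\le-(n-1)<0$, which settles any transition whose other end is of type II or of type I, and a transition linking a type-I (or type-II) vertex to an adjacent type-III vertex reduces, after substitution, to an elementary positivity inequality such as $(v_0-1)(n-v_0)+v_0>0$ or $(n+1-v_m)+(v_m-1)(n-v_m)>0$. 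Collecting these gives the strict decrease asserted in (1).

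For part (2) the plan is to split each $\theta_i$ into contributions along the edges incident to $i$ so that every edge contributes $0$ to the grand total. For an arrow $a$ with left end $i$ and right end $i+1$, set $\varepsilon(a)=+1$ if $a$ points rightward and $-1$ otherwise, and define $\phi(a,i):=(l_i+1)\,r_i\,\varepsilon(a)$ and $\phi(a,i+1):=-\,l_{i+1}(r_{i+1}+1)\,\varepsilon(a)$. Since $l_{i+1}=l_i+1$ and $r_{i+1}=r_i-1$, one has $(l_i+1)r_i=l_{i+1}(r_{i+1}+1)$, hence $\phi(a,i)+\phi(a,i+1)=0$. On the other hand, reading off at a vertex $v$ the orientations of its left and right edges in each of the four types (and in the two one-edge cases $v=1,n$, where $(l_1+1)r_1=l_n(r_n+1)=n-1$ and $\theta_v=\pm(n-1)$ matches), one checks in every case that $\theta_v=\sum_{a\text{ incident to }v}\phi(a,v)$: for instance at a type-I vertex the left edge points left and the right one points right, so the sum is $l_v(r_v+1)+(l_v+1)r_v=l_v+r_v+2l_vr_v$, and types II, III, IV are entirely analogous. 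Since $A_n$ is a connected path, each arrow meets exactly two vertices of $Q_0$, so regrouping the double sum by edges gives
$$\sum_{i\in Q_0}\theta_i=\sum_{i\in Q_0}\ \sum_{a\text{ incident to }i}\phi(a,i)=\sum_{a\in Q_1}\bigl(\phi(a,i_a)+\phi(a,j_a)\bigr)=0,$$
where $i_a,j_a$ denote the two ends of $a$; this is exactly (2).

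The one place I expect to require genuine thought is the choice of the per-edge contributions $\phi(a,\cdot)$ in part (2) — the ``discrete potential'' that makes the sum telescope; once they are written down, both the termwise cancellation and the type-by-type identification with $\theta_v$ are routine. In part (1) the only delicate point is the monotonicity-and-type analysis of a path, together with a careful statement of the reduction to the increasing case via left--right reflection; after that, the required weight inequalities are one-line computations.
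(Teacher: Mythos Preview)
Your argument is correct in both parts, but the route you take differs from the paper's in interesting ways.

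For part (1), the paper simply reduces to a single arrow $a$ and checks $\theta_{s(a)}>\theta_{t(a)}$ by a four-way case split on the orientations of the two arrows adjacent to $a$; these four cases are exactly your $(\mathrm{I},\mathrm{II})$, $(\mathrm{I},\mathrm{III})$, $(\mathrm{III},\mathrm{III})$, $(\mathrm{III},\mathrm{II})$ endpoint-type combinations. Your global analysis of the path (monotonicity of vertex labels, reduction via left--right reflection) is more elaborate than needed, but it is correct, and in fact your explicit appeal to the reflection symmetry covers a point the paper leaves implicit, namely the case where $s(a)$ lies to the \emph{right} of $t(a)$.

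For part (2), your approach is genuinely different and more elementary. The paper proves~(2) by introducing, for every connected subquiver $Q'\ni i$, the local weight $\theta_i^{Q'}=\#\{a\in Q'_1:s(a)=i\}-\#\{a\in Q'_1:t(a)=i\}$, showing both $\sum_{i\in Q'_0}\theta_i^{Q'}=0$ and $\theta_i=\sum_{Q'\ni i}\theta_i^{Q'}$, and then exchanging the order of summation. Your edge-based potentials $\phi(a,\cdot)$ give a cleaner telescoping proof of the bare identity $\sum_i\theta_i=0$. The trade-off is that the paper's subquiver decomposition is not incidental: it is reused verbatim in the proofs of Lemma~\ref{subweight} and the main lemma, where one must compute $\sum_{i\in Q'_0}\theta_i$ for a \emph{proper} connected subquiver $Q'$ (yielding the key formula $-|Q'_0|(l_{Q'}+r_{Q'})-2l_{Q'}r_{Q'}$). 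Your $\phi$-decomposition would also yield such partial sums, but you would have to redo that computation separately; the paper's method builds it in from the start.
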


\begin{proof}
(1) We just need to show along each arrow $a\in Q_1$, the weight decreases, i.e., $\theta_{s(a)}>\theta_{t(a)}$ for each arrow $a\in Q_1$.
We can draw the quiver $Q$ as follows:
$$Q:\quad\cdots\underset{s(a)}{\bullet}\xlongrightarrow{a}\underset{t(a)}{\bullet}\cdots,$$
if we consider the two vertices and arrows closed to $\underset{s(a)}{\bullet}\xlongrightarrow{a}\underset{t(a)}{\bullet}$, then we have the
following four cases:

$(i)$ $Q:\quad\cdots\longrightarrow\underset{s(a)}{\bullet}\xlongrightarrow{a}\underset{t(a)}{\bullet}\longleftarrow\cdots,$ in this case,
we have
$$\theta_{s(a)}=r_{s(a)}-l_{s(a)},\quad \theta_{t(a)}=-l_{t(a)}-r_{t(a)},$$

$(ii)$ $Q:\quad\cdots\longleftarrow\underset{s(a)}{\bullet}\xlongrightarrow{a}\underset{t(a)}{\bullet}\longrightarrow\cdots,$ in this case,
we have
$$\theta_{s(a)}=l_{s(a)}+r_{s(a)}+2l_{s(a)}\cdot r_{s(a)},\quad \theta_{t(a)}=r_{t(a)}-l_{t(a)},$$

$(iii)$ $Q:\quad\cdots\longrightarrow\underset{s(a)}{\bullet}\xlongrightarrow{a}\underset{t(a)}{\bullet}\longrightarrow\cdots,$ in this case,
we have
$$\theta_{s(a)}=r_{s(a)}-l_{s(a)},\quad \theta_{t(a)}=r_{t(a)}-l_{t(a)},$$

$(iv)$ $Q:\quad\cdots\longleftarrow\underset{s(a)}{\bullet}\xlongrightarrow{a}\underset{t(a)}{\bullet}\longleftarrow\cdots,$ in this case,
we have
$$\theta_{s(a)}=l_{s(a)}+r_{s(a)}+2l_{s(a)}\cdot r_{s(a)},\quad \theta_{t(a)}=-l_{t(a)}-r_{t(a)})-2l_{t(a)}\cdot r_{t(a)},$$
Since $r_{s(a)}>r_{t(a)}\geq0$ and $0\leq l_{s(a)}<l_{t(a)}$, in each case, we have $\theta_{s(a)}>\theta_{t(a)}$.\par

(2) For any subquiver $Q'=\{Q_0',Q_1'\}$ of $Q$, we give each vertex $i\in Q_0'$ a new weight:
$$\theta_i^{Q'}=\#\{a|\,\, s(a)=i, \,\,a\in Q_1'\}-\#\{a|\,\, t(a)=i, \,\,a\in Q_1'\}\in \{\pm2, 0\},$$
the difference of numbers of arrows in $Q_1'$ starting at $i$ and numbers of arrows in $Q_1'$ terminating at $i$, this construction immediately gives
$$\sum\limits_{i\in Q_0'}\theta_i^{Q'}=0.$$
Then for each vertex $i\in Q_0$, its weight $\theta_i$ is the sum of all such new weights $\theta_i^{Q'}$ for the connected subquiver $Q'$ contains $i$:
$$\theta_i=\sum\limits_{\substack{Q'\subset Q\,\,\text{connected subquiver}\\ i\in Q_0'}}\theta_i^{Q'}.$$

If $i$ is of type I, then near the vertex $i$, the quiver locally looks like $\longleftarrow\underset{i}{\bullet}\longrightarrow,$ all subquivers contain $i$ can be divided into three classes:

$(i)$ $\cdots\longleftarrow\underset{i}{\bullet}$,

$(ii)$ $\underset{i}{\bullet}\longrightarrow\cdots$,

$(iii)$ $\cdots\longleftarrow\underset{i}{\bullet}\longrightarrow\cdots$.

The first class has $l_i$ subquivers, for each subquiver $Q'$, we have $\theta_i^{Q'}=1$; the second class has $r_i$ subquivers, for each subquiver $Q'$, we have $\theta_i^{Q'}=1$; the third class has $l_ir_i$ subquivers, for each subquiver $Q'$, we have $\theta_i^{Q'}=2$. Consequently,
$$\sum\limits_{\substack{Q'\subset Q\,\,\text{connected subquiver}\\ i\in Q_0'}}\theta_i^{Q'}=l_i+r_i+2l_ir_i=\theta_i.$$

If $i$ is of type III, then near the vertex $i$, the quiver locally looks like $\longrightarrow\underset{i}{\bullet}\longrightarrow,$ all subquivers contain $i$ can be divided into three classes:

$(i)$ $\cdots\longrightarrow\underset{i}{\bullet}$,

$(ii)$ $\underset{i}{\bullet}\longrightarrow\cdots$,

$(iii)$ $\cdots\longrightarrow\underset{i}{\bullet}\longrightarrow\cdots$.

The first class has $l_i$ subquivers, for each subquiver $Q'$, we have $\theta_i^{Q'}=-1$; the second class has $r_i$ subquivers, for each subquiver $Q'$, we have $\theta_i^{Q'}=1$; the third class has $l_ir_i$ subquivers, for each subquiver $Q'$, we have $\theta_i^{Q'}=0$. Hence,
$$\sum\limits_{\substack{Q'\subset Q\,\,\text{connected subquiver}\\ i\in Q_0'}}\theta_i^{Q'}=r_i-l_i=\theta_i.$$
The cases of type II and  IV are similar.

Therefore, the sum of all weights is calculated  as
$$\sum\limits_{i\in Q_0}\theta_i=\sum\limits_{i\in Q_0}\Big(\sum\limits_{\substack{Q'\subset Q\,\,\text{connected subquiver}\\ i\in Q_0'}}\theta_i^{Q'}\Big)=
\sum\limits_{\substack{Q'\subset Q\\ \text{connected subquiver}}}\Big(\sum\limits_{i\in Q_0'}\theta_i^{Q'}\Big)=0.$$
 We complete the proof.
\end{proof}

We denote  the following indecomposable thin  representation (the orientation in the graph is just  an example)
$$\quad 0\longrightarrow \cdots \longrightarrow \underset{p}{k}\xlongrightarrow{1} \cdots \xlongrightarrow{1} \underset{q}{k}\longrightarrow 0\longrightarrow\cdots\longrightarrow0,$$
where $1\leq p\leq q\leq n$, by $I_{p,q}$.
Then the  indecomposable representations of $Q$  are classified by $I_{p,q}$'s, more precisely, we have
\begin{proposition}[\cite{KMS,LM}]
Let $X$ be a representation of a quiver $Q$ of type $A_n$. Then $X$ is indecomposable if and only if $X$ is a thin representation whose support quiver is connected, that is, $X$ is isomorphic to some $I_{p,q}$.
\end{proposition}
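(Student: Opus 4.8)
The plan is to prove the classification of indecomposable representations of an $A_n$-quiver in two directions, exploiting the combinatorial simplicity that type $A_n$ provides: namely that the underlying graph is a path, so every connected subquiver is again a path, and every interval representation $I_{p,q}$ is essentially forced once we fix its support. First I would dispose of the easy implication: if $X$ is thin with connected support quiver, then by Fact (3) already recorded in the excerpt, $X$ is indecomposable, and conversely the support quiver of an indecomposable is connected by Fact (1); so the whole content is that an indecomposable $X$ of an $A_n$-quiver must in fact be thin, i.e. $\dim_k X_i \le 1$ for all $i$, and that once it is thin and connected it must be isomorphic to the concrete representation $I_{p,q}$ in which the support is the interval $[p,q]$ and every arrow inside the support is the identity map $k \to k$.

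The main work is showing thinness. I would argue by induction on $n = |Q_0|$, working along the path from one end. Pick an endpoint vertex, say vertex $1$, which is attached to vertex $2$ by a single arrow $a$ (either $1 \to 2$ or $2 \to 1$). If $X_1 = 0$, then $X$ is supported on the subquiver $Q \setminus \{1\}$ of type $A_{n-1}$ and we conclude by induction. If $X_1 \ne 0$, consider the map $X_a$. Suppose the arrow is $a : 1 \to 2$, so $X_a : X_1 \to X_2$. If $X_a$ is not injective, choose a nonzero vector in its kernel, span a one-dimensional subspace $U_1 \subseteq X_1$, and check that $U = (U_1, 0, 0, \dots)$ is a subrepresentation which is a direct summand: indeed one can split $X_1 = U_1 \oplus X_1'$ and, because $X_a$ kills $U_1$, the decomposition is respected by all arrows, contradicting indecomposability (here we use that vertex $1$ has no other arrows, so no other compatibility is needed). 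Hence $X_a$ is injective. Similarly, if the arrow points the other way, $a : 2 \to 1$, the cokernel of $X_a$ inside $X_1$ splits off as a summand supported at vertex $1$ unless $X_a$ is surjective. So, after possibly replacing $X$ by an isomorphic copy that puts $X_a$ in a normal form (identity on a subspace), one reduces the problem at vertex $1$: either $X_1$ embeds into $X_2$ (or surjects onto $X_1$ from $X_2$), and in all cases a change of basis lets us identify $X_1$ with a subspace of (or quotient of) $X_2$ on which the arrow is the inclusion (or projection). The residual piece on $Q\setminus\{1\}$ is a representation of an $A_{n-1}$-quiver, which by induction decomposes into intervals $I_{p,q}$; then one checks that gluing vertex $1$ back picks out exactly one of those intervals to extend (the one whose left endpoint matches up with the arrow at vertex $1$), forcing $\dim X_1 = 1$ if $X$ is to stay indecomposable, and forcing the remaining intervals to be zero at vertex $2$ — otherwise $X$ visibly splits. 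Iterating (or running the induction cleanly) yields that $X$ is thin.

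Once thinness is established, the support quiver $\mathrm{supp}_X(Q)$ is a connected subquiver of a path, hence itself a path, i.e. an interval $[p,q] \subseteq \{1,\dots,n\}$; and on this interval each arrow $a$ connects two vertices with $X_a : k \to k$ a map which must be an isomorphism, for otherwise it is zero and the support quiver would be disconnected, contradicting Fact (1) (or, directly, $X$ would split at the breakpoint). Rescaling bases, every such $X_a$ becomes the identity, so $X \cong I_{p,q}$. The hard part will be the inductive thinness argument — specifically, being careful that splitting off a one-dimensional summand at an endpoint is genuinely compatible with the single arrow there and that, when one glues the endpoint back onto the inductively-decomposed tail, the indecomposability hypothesis really does collapse all but one interval; this bookkeeping is where an otherwise routine argument needs attention, though for type $A_n$ it is manageable because each vertex has degree at most two and there are no oriented cycles. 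I would remark that this is of course a classical result (Gabriel, and the cited \cite{KMS,LM}), and one may alternatively deduce it from Gabriel's theorem together with the fact that the positive roots of the $A_n$ quadratic form are exactly the $0$-$1$ vectors with connected support; but the direct argument above is self-contained and in the spirit of this paper.
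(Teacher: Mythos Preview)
The paper does not prove this proposition at all; it is stated with citations to \cite{KMS,LM} as a known classical result and is used as a black box in what follows. So there is no proof in the paper to compare your proposal against.

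Your direct inductive approach is a standard and essentially correct route. The endpoint reduction (splitting off a simple summand at vertex $1$ when the boundary map $X_a$ has a kernel, respectively cokernel) is fine and is exactly why the path shape of $A_n$ makes this tractable. The step you yourself flag as delicate --- gluing vertex $1$ back onto the inductively decomposed restriction $Y = X|_{Q\setminus\{1\}}$ --- is where the sketch is genuinely incomplete. The decomposition $Y \cong \bigoplus_j I_{p_j,q_j}$ is not canonical, and for an arbitrary choice of it the image $X_a(X_1)\subseteq Y_2$ need not sit inside a single interval summand, so ``$X$ visibly splits'' is not yet justified. What must be argued is that the interval decomposition of $Y$ can be \emph{chosen} so that $X_a(X_1)$ is a coordinate subspace; this comes down to understanding the action of $\mathrm{Aut}(Y)$ on $Y_2$ (for intervals starting at vertex $2$ this action is by a Borel-type group, and one checks it has enough orbits), or, more efficiently, one abandons the restriction-then-glue strategy and instead processes the arrows of $Q$ one by one, putting each linear map into Smith normal form and propagating the basis changes along the path --- possible precisely because $Q$ is a tree. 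Either fix is routine, but some such argument is needed before ``one checks'' becomes a proof. Your closing remark that Gabriel's theorem together with the explicit $A_n$ positive roots gives the result immediately is correct and is in fact the quickest justification.
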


\begin{lemma}\label{subweight}Let $Q$ be a quiver of $A_n$-type, and  $X$ is the indecomposable  representation of type $I_{1,n}$, then $X$ is stable with respect to the intrinsic weight function and rank function.
\end{lemma}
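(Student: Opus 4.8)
The plan is to identify the subrepresentations of $X=I_{1,n}$ with a suitable family of subsets of $Q_0$ and then to bound their weights by reusing the local decomposition of $\Theta$ from the proof of Lemma~\ref{total weight}. Since $X_i=k$ for every $i\in Q_0$ and every $X_a$ is an isomorphism, a subrepresentation $U\subseteq X$ is completely determined by the set $S=\{i\in Q_0: U_i\neq 0\}$, and compatibility with the maps $X_a$ forces $S$ to be \emph{successor-closed}, i.e.\ $s(a)\in S\Rightarrow t(a)\in S$ for every arrow $a\in Q_1$; conversely every successor-closed $S$ arises from a subrepresentation. By Lemma~\ref{total weight}(2) we have $w(X)=\sum_{i\in Q_0}\theta_i=0$ and $r(X)=n$, so $\mu(X)=0$; hence it suffices to show $w(U)=\sum_{i\in S}\theta_i<0$ for every $S$ with $\emptyset\neq S\subsetneq Q_0$.

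The main step is to reduce this to a trivial incidence count. Recall from the proof of Lemma~\ref{total weight}(2) that $\theta_i=\sum_{Q'}\theta_i^{Q'}$, the sum running over all connected subquivers $Q'\subseteq Q$ with $i\in Q_0'$, where $\theta_i^{Q'}$ counts arrows of $Q'$ starting at $i$ minus arrows of $Q'$ terminating at $i$. Summing over $i\in S$ and swapping the order of summation,
$$w(U)=\sum_{i\in S}\theta_i=\sum_{\substack{Q'\subseteq Q\\ \text{connected}}}\ \sum_{i\in S\cap Q_0'}\theta_i^{Q'}.$$
Fix such a $Q'$ and set $T=S\cap Q_0'$. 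Counting incidences of arrows of $Q'$ against $T$ yields
$$\sum_{i\in T}\theta_i^{Q'}=\#\{a\in Q_1': s(a)\in T,\ t(a)\notin T\}-\#\{a\in Q_1': s(a)\notin T,\ t(a)\in T\}.$$
The first term vanishes: an arrow of $Q'$ with source in $T\subseteq S$ has its target in $S$ by successor-closedness, and that target is a vertex of $Q'$, hence lies in $T$. So $\sum_{i\in T}\theta_i^{Q'}\leq 0$, with equality exactly when there is no $Q'$-edge between $T$ and $Q_0'\setminus T$; since the underlying graph of a connected type-$A$ quiver is a path, this forces $T=\emptyset$ or $T=Q_0'$.

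It remains to get strictness. Since $\emptyset\neq S\subsetneq Q_0$ and the underlying graph of $Q$ is a connected path, there are consecutive vertices $k,k+1$ with exactly one of them in $S$. The two-vertex connected subquiver $Q'$ supported on $\{k,k+1\}$ then has $T=S\cap Q_0'$ equal to $\{k\}$ or $\{k+1\}$, a proper nonempty subset of $Q_0'$, so its contribution to the double sum is $-1$, while every other summand is $\leq 0$. Hence $w(U)\leq -1<0$, so $\mu(U)=w(U)/r(U)<0=\mu(X)$, and $X$ is stable.

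I expect the one genuinely substantive point to be the local identity for $\sum_{i\in T}\theta_i^{Q'}$ together with the observation that successor-closedness kills one of its two terms; everything afterwards is bookkeeping about connectedness of paths. The step I would deliberately avoid is attacking $w(U)<0$ directly via the explicit formula~\eqref{2.2}: breaking $S$ into its maximal intervals and summing the four vertex-type expressions (with their quadratic terms $l_ir_i$) is considerably messier and hides the reason the inequality holds, whereas the subquiver decomposition makes the sign manifest.
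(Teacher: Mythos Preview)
Your proof is correct. Both you and the paper start from the same key decomposition $\theta_i=\sum_{Q'}\theta_i^{Q'}$ established in the proof of Lemma~\ref{total weight}(2), but the executions diverge from there. The paper fixes the support $Q'$ of the subrepresentation (observing that the arrows adjacent to $Q'$ point \emph{into} $Q'$, which is exactly your successor-closedness), then splits the connected subquivers into three geometric families according to whether they overhang $Q'$ on the left, on the right, or on both sides, and sums explicitly to obtain the closed formula
\[
w_\Theta(X')=-|Q_0'|\,l_{Q'}-|Q_0'|\,r_{Q'}-2\,l_{Q'}r_{Q'}<0.
\]
You instead keep the sum over all connected subquivers intact, note that successor-closedness of $S$ forces the ``outgoing'' count $\#\{a\in Q_1':s(a)\in T,\ t(a)\notin T\}$ to vanish for every $Q'$, and conclude that every summand is $\le 0$, with at least one two-vertex subquiver contributing $-1$. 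Your route is shorter and more conceptual for this lemma in isolation, and it treats disconnected $S$ uniformly without a separate reduction. The paper's route has the compensating advantage that its explicit formula for $\theta_{\mathrm{add}}(Q')$ is exactly what is reused in the next lemma to handle the general $I_{p,q}$, where one must compare slopes rather than merely show negativity; your argument, as it stands, does not immediately supply those quantitative comparisons.
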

\begin{proof}To show the stability, we only need to prove the intrinsic weight function $w_\Theta(X^\prime)$ on any proper subrepresentation $X^\prime$ of $I_{1,n}$ is negative.
Obviously, the support quiver $Q^\prime=\text{supp}_{X'}(Q)$ is a proper full subquiver of $Q$.
We first assume  $Q'$ is connected, then $Q$ must look like as follows:
$$Q:\quad \cdots\bullet\longrightarrow Q' \longleftarrow\bullet \cdots,$$
and   denoting the two vertices of the boundary of $Q'$ by $s(Q')$ and $t(Q')$, one draws $Q$ as follows:
$$Q:\quad \cdots\bullet\longrightarrow\underset{s(Q')}{\bullet}\cdots\underset{t(Q')}{\bullet} \longleftarrow\bullet \cdots.$$
Let $l_{Q'}$ and $r_{Q'}$ denote the number of vertices on the left of the whole $Q'$ and the number of vertices on the right of the whole $Q'$, respectively.

To compute the  weight function $w_\Theta(X^\prime)$, we first separate $Q'$ from $Q$, and view $Q'$ as an independent quiver. Then $Q^\prime$ carries a weight system $\Theta^\prime$, called the independent weight system,  given by the manner described previously so that the sum denoted by $\theta_{\textrm{ind}}(Q')$ of the weights belong to the  independent weight system  is zero. By Lemma \ref{total weight}, the actual weight function $w_\Theta(X^\prime)$ is the sum of $\theta_{\textrm{ind}}(Q')$ and $\theta_{\text{add}}(Q')$, where $\theta_{\text{add}}(Q')$ is the sum of the added new weights at the vertices in $Q^\prime_0$ caused by the connected subquivers containing not only  vertices in $ Q_0'$ but also in $Q_0\backslash Q_0'$.  Such connected quivers are divided into three cases:

 $(i)$ the considered connected subquiver ( inside the box) contains vertices in $Q'$ and some vertices only on the right of $Q'$, like the following:
 $$\cdots\bullet\longrightarrow\underset{s(Q')}{\bullet}\cdots\boxed{\cdots\underset{t(Q')}{\bullet} \longleftarrow\bullet \cdots}\cdots,$$

 $(ii)$ the considered connected subquiver (inside the box) contains vertices in $Q'$ and some vertices only on the left of $Q'$, like the following:
 $$\cdots\boxed{\cdots\bullet\longrightarrow\underset{s(Q')}{\bullet}\cdots}\cdots\underset{t(Q')}{\bullet} \longleftarrow\bullet\cdots,$$

 $(iii)$ the considered connected subquiver ( inside the box) contains the whole $Q'$ and some vertices both on the right and on the left of $Q'$, like the following:
 $$\cdots\boxed{\cdots\bullet\longrightarrow\underset{s(Q')}{\bullet}\cdots\underset{t(Q')}{\bullet} \longleftarrow\bullet\cdots}\cdots.$$

 The first case  includes  $r_{Q'}\cdot|Q_0'|$ choices. A key observation  is that   each choice contributes a term $-1$ to  the sum of  weights at the vertices of $Q'$.
Indded, let $\tilde Q$ be a such connected subquiver which produce new weight for the vertices in $\tilde Q_0$ as  in the proof of Lemma \ref{total weight}
$$\theta_i^{\tilde{Q}}=\#\{a|\,\, s(a)=i, \,\,a\in \tilde{Q}_1\}-\#\{a|\,\, t(a)=i, \,\,a\in \tilde{Q}_1\}.$$
 Then once we compute the sum $\sum\limits_{i\in\tilde{Q}_0\cap Q'_0}\theta_i^{\tilde Q}$, the inner arrows of $\tilde{Q}\cap Q'$ do no work, only the arrow  closest attaching  to $\tilde{Q}\cap Q'$ has effect   by providing one term -1 in the sum.
Similarly, the second case admits  $l_{Q'}\cdot|Q_0'|$ choices, and  each case contributes a term  $-1$ to  the sum; the third case contains $l_{Q'}\cdot r_{Q'}$  choices, and  each case contributes a term  $-2$ to  the sum. Finally we reach
 $$\theta_{\text{add}}(Q')=-r_{Q'}\cdot|Q_0'|-l_{Q'}\cdot|Q_0'|-2r_{Q'}\cdot l_{Q'},$$
hence the weight function $w_\Theta(X^\prime)$  is given by
 \begin{equation*}
   \begin{split}
   \theta(Q')=\theta_{\textrm{ind}}(Q')+\theta_{\text{add}}(Q')&=-r_{Q'}\cdot|Q_0'|-l_{Q'}\cdot|Q_0'|-2r_{Q'}\cdot l_{Q'}<0.
   \end{split}
 \end{equation*}

If $Q'$ is not connected, we denote its connected components as $Q^1, Q^2, \cdots Q^s$ which correspond to the direct summand $X^i$ of the representation $X^\prime$, then
 $$w_\Theta(X')=\sum\limits_{i=1}^sw_\Theta(X^i).$$
For each summand we have shown it is negative.
\end{proof}

\subsection{Proof of the Main Theorem}
We complete our proof of the main theorem by the following lemma.

\begin{lemma}
Let $Q$ be a quiver of $A_n$-type, then every indecomposable representation is stable with respect to the intrinsic weight function and rank function.
\end{lemma}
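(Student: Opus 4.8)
The plan is to combine the classification of the indecomposables with Lemmas~\ref{total weight} and~\ref{subweight}. By the classification recalled above, every indecomposable representation is isomorphic to some $I_{p,q}$, so fix $X=I_{p,q}$ and let $Q'=\text{supp}_X(Q)$, the full subquiver on the consecutive vertices $p,p+1,\dots,q$. We must show $\mu(X')<\mu(X)$ for every proper nonzero subrepresentation $X'\subset X$. As $X$ is thin, $X'$ is determined by the set $S\subseteq\{p,\dots,q\}$ of vertices where it is nonzero, and being a subrepresentation forces $S$ to be closed under every arrow of $Q'$ that starts in $S$; writing $S$ as the disjoint union of its maximal intervals and noting that each of those is again closed under outgoing arrows, we get $X'\cong\bigoplus_j I_{c_j,d_j}$ with each $I_{c_j,d_j}$ a subrepresentation of $X$. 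Since $w_\Theta$ and $r$ are additive, $\mu(X')$ is the $r$-weighted average of the $\mu(I_{c_j,d_j})$, so it suffices to prove $\mu(I_{c,d})<\mu(X)$ for every proper sub-interval $[c,d]\subsetneq[p,q]$ that is closed under the outgoing arrows of $Q'$. Such an interval has the feature that, inside $Q$, both its boundary arrows point \emph{into} $[c,d]$, except possibly at an endpoint it shares with $Q'$, where the arrow keeps its direction in $Q$.

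The computational core is a closed formula for partial sums of $\Theta$. Starting from the decomposition $\theta_i=\sum_{Q''}\theta_i^{Q''}$ over the connected subquivers $Q''\ni i$ used in the proof of Lemma~\ref{total weight}, summing over $i$ in an interval $[a,b]$ and exchanging the two summations, the arrows internal to $[a,b]$ telescope away and only the (at most two) boundary arrows of $[a,b]$ in $Q$ contribute; classifying the $Q''$ according to whether they protrude to the left of $[a,b]$, to the right, or on both sides --- the same trichotomy as in the proof of Lemma~\ref{subweight} --- one obtains
$$\sum_{i=a}^{b}\theta_i=\epsilon_L\, l\,(b-a+1+r)+\epsilon_R\, r\,(b-a+1+l),$$
where $l$ and $r$ are the numbers of vertices of $Q$ strictly to the left and to the right of $[a,b]$, and $\epsilon_L,\epsilon_R\in\{-1,+1\}$ equals $-1$ when the corresponding boundary arrow points into $[a,b]$ and $+1$ otherwise, a term being dropped when $[a,b]$ already reaches that end of $Q$; for $a=b$ this recovers~\eqref{2.2}. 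Applying it with $[a,b]=[p,q]$ computes $w_\Theta(X)$ and $\mu(X)$, and with $[a,b]=[c,d]$ it computes $w_\Theta(I_{c,d})$, where by the closure property above the boundary sign is forced to be $-1$ on any side of $[c,d]$ lying strictly inside $[p,q]$.

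Finally, set $s=c-p\ge0$ and $t=q-d\ge0$, so $s+t\ge1$ and $|[c,d]|=(q-p+1)-s-t$. Substituting the two instances of the formula into $\mu(I_{c,d})<\mu(X)$ and clearing the positive denominators turns the statement into a polynomial inequality in $s$, $t$, the two lengths, and the left/right vertex counts of $Q'$. When $s>0$ and $t>0$ we have $w_\Theta(I_{c,d})<0$, and a short expansion gives $|[c,d]|\,w_\Theta(X)-(q-p+1)\,w_\Theta(I_{c,d})\ge 2(q-p+1)\,st>0$; when $t=0$ (and symmetrically when $s=0$) a genuine sign cancellation occurs --- precisely the case of shrinking the support on one side against a boundary arrow of $Q'$ that points outward, where $w_\Theta(I_{c,d})$ and $w_\Theta(X)$ may both be positive --- and after expansion the difference collapses to $(q-p+1)\,s\,(d-c+1)\ge 1$. (For $Q'=Q$ the formula gives $\mu(X)=0$ and the claim is exactly Lemma~\ref{subweight}.) The only step I expect to demand real care is this last one-sided case, where the positivity has to be extracted from cancelling contributions rather than read off term by term; everything else is bookkeeping resting directly on Lemmas~\ref{total weight} and~\ref{subweight}.
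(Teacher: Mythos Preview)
Your argument is correct and rests on the same machinery as the paper's proof---the decomposition of $\theta_i$ as a sum over connected subquivers (Lemma~\ref{total weight}) and the resulting formula for $\sum_{i\in[a,b]}\theta_i$ (Lemma~\ref{subweight})---but you organize the case analysis more economically in two ways. First, you reduce to a single connected sub-interval $[c,d]\subsetneq[p,q]$ via the weighted-average observation; the paper instead carries all the components $Q^1,\dots,Q^s$ of $\text{supp}_{X'}(Q)$ through the computation simultaneously, which forces a three-way split (Cases I/II/III) according to how many of them touch the boundary of $Q^X$. Second, your closed formula with the boundary signs $\epsilon_L,\epsilon_R\in\{\pm1\}$ packages what the paper treats as four separate subcases $(a)$--$(d)$ for the arrows adjacent to $Q^X$. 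Your trichotomy $s,t>0$ / $t=0$ / $s=0$ is exactly the paper's Cases I/II/III restricted to a single component, and your observation that the delicate cancellation occurs only when one endpoint is shared (your $t=0$ case) matches the paper's need to treat II-$(d)$ and III-$(b)$,$(c)$ separately. The paper's version is more explicit---each of the inequalities (\ref{2.4})--(\ref{2.8}) is written out and checked---while yours is shorter; one should just be aware that in your boundary case the claimed value $(q-p+1)\,s\,(d-c+1)$ for the difference is the exact value only for one of the four sign patterns of $(\epsilon_L^X,\epsilon_R^X)$, the others giving something strictly larger, so a line acknowledging this (or invoking that $\mu(X)$ is minimized when both boundary arrows point in, as the paper does in Case~I) would make the sketch airtight.
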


\begin{proof}
Let $X$ be an indecomposable representation of $Q$ and $X'\subset X$ be any proper subrepresentation.
Now $X$ must be of type $I_{p,q}$ with support quiver $Q^X:=\text{supp}_X(Q)$  connected, and  the support quiver $Q^{X'}:=\text{supp}_{X'}(Q)$ of $X'$ is a proper full subquiver of $Q^X$. Therefore our aim is to prove the following inequality holds for any proper full subquiver $Q^{X'}$ of $Q^X$:
\begin{equation}\label{2.3}
  \frac{w_\Theta(X^\prime)}{|Q^{X'}_0|}<\frac{w_\Theta(X)}{|Q^X_0|}.
\end{equation}

Let $Q^{X'}$ has $s$ connected components $Q^1, \cdots, Q^s$, clearly each $Q^i$ is a proper full subquiver of $Q^X$. To calculate the total weights $w_\Theta(X)$ and $w_\Theta(X^\prime)$, similar as the proof of Lemma \ref{subweight}, we first separate $Q^X$ from the whole quiver $Q$ to get the sums $\theta_{\text{ind}}(Q^X)$ ($=0$) and $\theta_{\text{ind}}(Q^{X'})$ ($<0$) coming from the independent weight system on $Q^X$. Secondly, we calculate the sums $\theta_{\text{add}}(Q^X)$ and $\theta_{\text{add}}(Q^{X'})$ when the rest parts of $Q$ are considered.

According to the relation of $Q^i$ and $Q^{X'}$, we can divide our consideration  into three different big cases, and when we take  the rest part of $Q$ into account, each big case can be divided into four different small cases.

Case I: all $Q^i$ are in the interior of $Q^X$, illustrated as follows:
$$\boxed{\cdots\bullet\longrightarrow\boxed{Q^1}\longleftarrow\bullet\cdots\bullet\longrightarrow\boxed{Q^2}\longleftarrow\bullet\cdots\cdots\bullet\longrightarrow
\boxed{Q^s}\longleftarrow\bullet\cdots}$$

Case II: there is a full subquiver of $Q^X$ (without loss of generality, assumed to be  $Q^s$) that shares  one boundary vertex with $Q^X$, illustrated as follows:
$$\boxed{\cdots\bullet\longrightarrow\boxed{Q^1}\longleftarrow\bullet\cdots\bullet\longrightarrow\boxed{Q^2}\longleftarrow\bullet\cdots\cdots\bullet\longrightarrow
\boxed{Q^s}}$$

Case III: there are two full subquivers of $Q^X$  (assumed to be  $Q^1$ and $Q^s$) each of which  has one boundary vertex coincides with that of  $Q^X$, illustrated as follows:
$$\boxed{\boxed{Q^1}\longleftarrow\bullet\cdots\bullet\longrightarrow\boxed{Q^2}\longleftarrow\bullet\cdots\cdots\bullet\longrightarrow
\boxed{Q^s}}$$

$\bullet$ We first consider the  Case I, when we added the rest of $Q$, there are   following four different cases:

$(a)$ the two arrows near $Q^X$   both point into $Q^X$:
$$\cdots\bullet\longrightarrow\boxed{\cdots\bullet\longrightarrow\boxed{Q^1}\longleftarrow\bullet\cdots\bullet\longrightarrow\boxed{Q^2}\longleftarrow\bullet\cdots\cdots\bullet\longrightarrow
\boxed{Q^s}\longleftarrow\bullet\cdots}\longleftarrow\bullet\cdots,$$

$(b)$ the left arrow near $Q^X$ is  out from $Q^X$ and the right one points into $Q^X$:
$$\cdots\bullet\longleftarrow\boxed{\cdots\bullet\longrightarrow\boxed{Q^1}\longleftarrow\bullet\cdots\bullet\longrightarrow\boxed{Q^2}\longleftarrow\bullet\cdots\cdots\bullet\longrightarrow
\boxed{Q^s}\longleftarrow\bullet\cdots}\longleftarrow\bullet\cdots,$$

$(c)$ the two arrows near $Q^X$ are  both out from $Q^X$:
$$\cdots\bullet\longleftarrow\boxed{\cdots\bullet\longrightarrow\boxed{Q^1}\longleftarrow\bullet\cdots\bullet\longrightarrow\boxed{Q^2}\longleftarrow\bullet\cdots\cdots\bullet\longrightarrow
\boxed{Q^s}\longleftarrow\bullet\cdots}\longrightarrow\bullet\cdots,$$

$(d)$ the left arrow near $Q^X$ points into $Q^X$ and the right one is out from $Q^X$:
$$\cdots\bullet\longrightarrow\boxed{\cdots\bullet\longrightarrow\boxed{Q^1}\longleftarrow\bullet\cdots\bullet\longrightarrow\boxed{Q^2}\longleftarrow\bullet\cdots\cdots\bullet\longrightarrow
\boxed{Q^s}\longleftarrow\bullet\cdots}\longrightarrow\bullet\cdots.$$

No matter in what case, the slope of $X'$ is the same, to show $\mu(X')<\mu(X)$, we just need to consider the  case (a), since the value of  $\mu(X)$ is the minimum among these cases. Now we have

$$\mu(X')=\frac{w_\Theta(X^\prime)}{|Q^{X'}_0|}=\frac{\sum\limits_{i=1}^sw_\Theta(X^i)}{\sum\limits_{i=1}^s|Q^i_0|}=
\frac{\sum\limits_{i=1}^s\theta_{\text{ind}}(Q^i)+\sum\limits_{i=1}^s\theta_{\text{add}}(Q^i)}{\sum\limits_{i=1}^s|Q^i_0|},$$
$$\mu(X)=\frac{w_\Theta(X)}{|Q^X_0|}=\frac{\theta_{\text{ind}}(Q^X)+\theta_{\text{add}}(Q^X)}{|Q^X_0|},$$
where
 $$\theta_{\text{add}}(Q^X)=-|Q_0^X|(l_{Q^X}+r_{Q^X})-2l_{Q^X}\cdot r_{Q^X},$$
 \begin{equation*}
   \begin{split}
     \theta_{\text{add}}(Q^i)&=-|Q_0^i|(l_{Q^X}+r_{Q^X})-2l_{Q^X}\cdot r_{Q^i}-2l_{Q^i}\cdot r_{Q^X}+2l_{Q^X}\cdot r_{Q^X}\\
     &=-|Q_0^i|(l_{Q^X}+r_{Q^X})-2l_{Q^X}\cdot(r_{Q^i}-r_{Q^X})-2l_{Q^i}\cdot r_{Q^X},\quad 1\leq i\leq s,
   \end{split}
 \end{equation*}
 thus
 $$\sum\limits_{i=1}^s\theta_{\text{add}}(Q^i)=-(\sum\limits_{i=1}^s|Q_0^i|)(l_{Q^X}+r_{Q^X})-2l_{Q^X}\cdot\sum\limits_{i=1}^s(r_{Q^i}-r_{Q^X})
 -2\sum\limits_{i=1}^sl_{Q^i}\cdot r_{Q^X}.$$
Then the inequality (\ref{2.3}) reads
\begin{equation*}
\begin{split}
  &\frac{\sum\limits_{i=1}^s\theta_{\text{ind}}(Q^i)-(\sum\limits_{i=1}^s|Q_0^i|)(l_{Q^X}+r_{Q^X})-2l_{Q^X}\cdot\sum\limits_{i=1}^s(r_{Q^i}-r_{Q^X})
 -2\sum\limits_{i=1}^sl_{Q^i}\cdot r_{Q^X}}{\sum\limits_{i=1}^s|Q^i_0|}\\
  <&\frac{-|Q_0^X|(l_{Q^X}+r_{Q^X})-2l_{Q^X}\cdot r_{Q^X}}{|Q_0^X|},
  \end{split}
\end{equation*}
or reads
\begin{equation}\label{2.4}
  \frac{\sum\limits_{i=1}^s\theta_{\text{ind}}(Q^i)}{\sum\limits_{i=1}^s|Q^i_0|}
  <\frac{2l_{Q^X}\cdot\sum\limits_{i=1}^s(r_{Q^i}-r_{Q^X})}{\sum\limits_{i=1}^s|Q^i_0|}+2\sum\limits_{i=1}^sl_{Q^i}\cdot r_{Q^X}(\frac{1}{\sum\limits_{i=1}^s|Q^i_0|}-\frac{1}{|Q_0^X|}).
\end{equation}
Since $\sum\limits_{i=1}^s\theta_{\text{ind}}(Q^i)<0$, $l_{Q^X}\geq0$, $r_{Q^X}\geq0$, $r_{Q^i}>r_{Q^X}$ and $\sum\limits_{i=1}^s|Q^i_0|<|Q_0^X|$,
the equality (\ref{2.4}) holds.

$\bullet$ For  the Case II, as the analysis process in Case I, if, we also have the following four different cases when  the rest of $Q$ is added:

$(a)$
$\cdots\bullet\longrightarrow\boxed{\cdots\bullet\longrightarrow\boxed{Q^1}\longleftarrow\bullet\cdots\bullet\longrightarrow\boxed{Q^2}\longleftarrow\bullet\cdots\cdots\bullet\longrightarrow
\boxed{Q^s}}\longleftarrow\bullet\cdots$,

$(b)$ $\cdots\bullet\longleftarrow\boxed{\cdots\bullet\longrightarrow\boxed{Q^1}\longleftarrow\bullet\cdots\bullet\longrightarrow\boxed{Q^2}\longleftarrow\bullet\cdots\cdots\bullet\longrightarrow
\boxed{Q^s}}\longleftarrow\bullet\cdots$,

$(c)$ $\cdots\bullet\longleftarrow\boxed{\cdots\bullet\longrightarrow\boxed{Q^1}\longleftarrow\bullet\cdots\bullet\longrightarrow\boxed{Q^2}\longleftarrow\bullet\cdots\cdots\bullet\longrightarrow
\boxed{Q^s}}\longrightarrow\bullet\cdots$,

$(d)$ $\cdots\bullet\longrightarrow\boxed{\cdots\bullet\longrightarrow\boxed{Q^1}\longleftarrow\bullet\cdots\bullet\longrightarrow\boxed{Q^2}\longleftarrow\bullet\cdots\cdots\bullet\longrightarrow
\boxed{Q^s}}\longrightarrow\bullet\cdots$.

Note that in $(a)$ and $(b)$, $\mu(X')$ is the same, however, $\mu(X)$ is smaller in $(a)$, so we just need to show $\mu(X')<\mu(X)$ in $(a)$. In $(c)$ and $(d)$, $\mu(X')$ is the same, however, $\mu(X)$ is smaller in $(d)$, so we just need to show $\mu(X')<\mu(X)$ in $(d)$.

For the  case $(a)$, we have
\begin{equation*}
  \begin{split}
    \theta_{\text{add}}(Q^X)&=-|Q_0^X|(l_{Q^X}+r_{Q^X})-2l_{Q^X}\cdot r_{Q^X},\\
 \theta_{\text{add}}(Q^i)&=-|Q_0^i|(l_{Q^X}+r_{Q^X})-2l_{Q^X}\cdot(r_{Q^i}-r_{Q^X})-2l_{Q^i}\cdot r_{Q^X},\quad 1\leq i\leq s-1,\\
 \theta_{\text{add}}(Q^s)&=-|Q_0^s|(l_{Q^X}+r_{Q^X})-2l_{Q^s}\cdot r_{Q^X}-2l_{Q^X}\cdot r_{Q^s}+2l_{Q^X}\cdot r_{Q^X}\\
                         &=-|Q_0^s|(l_{Q^X}+r_{Q^X})-2l_{Q^s}\cdot r_{Q^X},
  \end{split}
\end{equation*}
thus the inequality (\ref{2.3}) reads
\begin{equation*}
\begin{split}
  &\frac{\sum\limits_{i=1}^s\theta_{\text{ind}}(Q^i)-(\sum\limits_{i=1}^s|Q_0^i|)(l_{Q^X}+r_{Q^X})-2l_{Q^X}\cdot\sum\limits_{i=1}^s(r_{Q^i}-r_{Q^X})
 -2\sum\limits_{i=1}^sl_{Q^i}\cdot r_{Q^X}}{\sum\limits_{i=1}^s|Q^i_0|}\\
  <&\frac{-|Q_0^X|(l_{Q^X}+r_{Q^X})-2l_{Q^X}\cdot r_{Q^X}}{|Q_0^X|},
  \end{split}
\end{equation*}
or reads
\begin{equation}\label{2.5}
  \frac{\sum\limits_{i=1}^s\theta_{\text{ind}}(Q^i)}{\sum\limits_{i=1}^s|Q^i_0|}
  <\frac{2l_{Q^X}\cdot\sum\limits_{i=1}^s(r_{Q^i}-r_{Q^X})+2\sum\limits_{i=1}^sl_{Q^i}\cdot r_{Q^X}}{\sum\limits_{i=1}^s|Q^i_0|}-\frac{2l_{Q^X}\cdot r_{Q^X}}{|Q_0^X|}.
\end{equation}
This inequality is the same as (\ref{2.4}), thus holds true.

For the case $(d)$, we have
\begin{equation*}
  \begin{split}
    \theta_{\text{add}}(Q^X)&=|Q_0^X|(r_{Q^X}-l_{Q^X}),\\
 \theta_{\text{add}}(Q^i)&=-|Q_0^i|(l_{Q^X}+r_{Q^X})-2l_{Q^X}\cdot(r_{Q^i}-r_{Q^X})-2l_{Q^i}\cdot r_{Q^X},\quad 1\leq i\leq s-1,\\
 \theta_{\text{add}}(Q^s)&=-|Q_0^s|(l_{Q^X}-r_{Q^X})\\
                         &=-|Q_0^s|(l_{Q^X}+r_{Q^X})+2|Q_0^s|\cdot r_{Q^X},
  \end{split}
\end{equation*}
thus the inequality (\ref{2.3}) reads
\begin{equation*}
\begin{split}
  &\frac{\sum\limits_{i=1}^s\theta_{\text{ind}}(Q^i)-(\sum\limits_{i=1}^s|Q_0^i|)(l_{Q^X}+r_{Q^X})-2l_{Q^X}\cdot\sum\limits_{i=1}^s(r_{Q^i}-r_{Q^X})
 -2\sum\limits_{i=1}^sl_{Q^i}\cdot r_{Q^X}+2|Q_0^s|\cdot r_{Q^X}}{\sum\limits_{i=1}^s|Q^i_0|}\\
  <&\frac{|Q_0^X|(r_{Q^X}-l_{Q^X})}{|Q_0^X|},
  \end{split}
\end{equation*}
or reads
\begin{align}\label{2.6}
  \frac{\sum\limits_{i=1}^s\theta_{\text{ind}}(Q^i)}{\sum\limits_{i=1}^s|Q^i_0|}
  <2r_{Q^X}-\frac{2|Q_0^s|\cdot r_{Q^X}}{\sum\limits_{i=1}^s|Q^i_0|}+\frac{2l_{Q^X}\cdot\sum\limits_{i=1}^{s-1}(r_{Q^i}-r_{Q^X})+2\sum\limits_{i=1}^sl_{Q^i}\cdot r_{Q^X}}{\sum\limits_{i=1}^s|Q^i_0|},
\end{align}
which is true due to again $\sum\limits_{i=1}^s\theta_{\text{ind}}(Q^i)<0$, $l_{Q^X}\geq0$, $r_{Q^X}\geq0$, $r_{Q^i}>r_{Q^X}$ and $\sum\limits_{i=1}^s|Q^i_0|<|Q_0^X|$.

$\bullet$ Last we consider the Case III. The subcases (a) and (d) are similar to the cases I-(a) and II-(d) respectively.

For the case $(b)$
$$\cdots\bullet\longleftarrow\boxed{\boxed{Q^1}\longleftarrow\bullet\cdots\bullet\longrightarrow\boxed{Q^2}\longleftarrow\bullet\cdots\cdots\bullet\longrightarrow
\boxed{Q^s}}\longleftarrow\bullet\cdots,$$
we have
\begin{equation*}
  \begin{split}
    \theta_{\text{add}}(Q^X)&=|Q_0^X|(l_{Q^X}-r_{Q^X}),\\
    \theta_{\text{add}}(Q^1)&=|Q_0^1|(l_{Q^X}-r_{Q^X})=-|Q_0^1|(l_{Q^X}+r_{Q^X})+2|Q_0^1|\cdot l_{Q^X},\\
 \theta_{\text{add}}(Q^i)&=-|Q_0^i|(l_{Q^X}+r_{Q^X})-2l_{Q^X}\cdot(r_{Q^i}-r_{Q^X})-2l_{Q^i}\cdot r_{Q^X},\quad 2\leq i\leq s-1,\\
 \theta_{\text{add}}(Q^s)&=-|Q_0^s|(l_{Q^X}+r_{Q^X})-2l_{Q^s}\cdot r_{Q^X}-2l_{Q^X}\cdot r_{Q^s}+2l_{Q^X}\cdot r_{Q^X}\\
                         &=-|Q_0^s|(l_{Q^X}+r_{Q^X})-2l_{Q^s}\cdot r_{Q^X},
  \end{split}
\end{equation*}
thus the inequality (\ref{2.3}) reads
\begin{equation*}
\begin{split}
  &\frac{\sum\limits_{i=1}^s\theta_{\text{ind}}(Q^i)-(\sum\limits_{i=1}^s|Q_0^i|)(l_{Q^X}+r_{Q^X})-2l_{Q^X}\cdot\sum\limits_{i=2}^s(r_{Q^i}-r_{Q^X})
 -2\sum\limits_{i=2}^sl_{Q^i}\cdot r_{Q^X}+2|Q_0^1|\cdot l_{Q^X}}{\sum\limits_{i=1}^s|Q^i_0|}\\
  <&\frac{|Q_0^X|(l_{Q^X}-r_{Q^X})}{|Q_0^X|},
  \end{split}
\end{equation*}
or reads
\begin{equation}\label{2.7}
  \frac{\sum\limits_{i=1}^s\theta_{\text{ind}}(Q^i)}{\sum\limits_{i=1}^s|Q^i_0|}
  <2l_{Q^X}-\frac{2|Q_0^1|}{\sum\limits_{i=1}^s|Q^i_0|}\cdot l_{Q^X}+\frac{2l_{Q^X}\cdot\sum\limits_{i=2}^s(r_{Q^i}-r_{Q^X})+2\sum\limits_{i=2}^sl_{Q^i}\cdot r_{Q^X}}{\sum\limits_{i=1}^s|Q^i_0|},
\end{equation}
which  holds.

For the case $(c)$
$$\cdots\bullet\longleftarrow\boxed{\boxed{Q^1}\longleftarrow\bullet\cdots\bullet\longrightarrow\boxed{Q^2}\longleftarrow\bullet\cdots\cdots\bullet\longrightarrow
\boxed{Q^s}}\longrightarrow\bullet\cdots,$$
we have
\begin{equation*}
  \begin{split}
    \theta_{\text{add}}(Q^X)&=|Q_0^X|(l_{Q^X}+r_{Q^X})+2l_{Q^X}\cdot r_{Q^X},\\
    \theta_{\text{add}}(Q^1)&=|Q_0^1|(l_{Q^X}-r_{Q^X})=-|Q_0^1|(l_{Q^X}+r_{Q^X})+2|Q_0^1|\cdot l_{Q^X},\\
 \theta_{\text{add}}(Q^i)&=-|Q_0^i|(l_{Q^X}+r_{Q^X})-2l_{Q^X}\cdot(r_{Q^i}-r_{Q^X})-2l_{Q^i}\cdot r_{Q^X},\quad 2\leq i\leq s-1,\\
 \theta_{\text{add}}(Q^s)&=-|Q_0^s|(l_{Q^X}-r_{Q^X})=-|Q_0^s|(l_{Q^X}+r_{Q^X})-2|Q_0^s|\cdot r_{Q^X},
  \end{split}
\end{equation*}
thus the inequality (\ref{2.3}) reads
\begin{equation*}
\begin{split}
  &\frac{\sum\limits_{i=1}^s\theta_{\text{ind}}(Q^i)-(\sum\limits_{i=1}^s|Q_0^i|)(l_{Q^X}+r_{Q^X})-2l_{Q^X}\cdot\sum\limits_{i=2}^{s-1}(r_{Q^i}-r_{Q^X})
 -2\sum\limits_{i=2}^{s-1}l_{Q^i}\cdot r_{Q^X}+2|Q_0^1|\cdot r_{Q^X}-2|Q_0^s|\cdot r_{Q^X}}{\sum\limits_{i=1}^s|Q^i_0|}\\
  <&\frac{|Q_0^X|(l_{Q^X}+r_{Q^X})+2l_{Q^X}\cdot r_{Q^X}}{|Q_0^X|},
  \end{split}
\end{equation*}
or reads
\begin{equation}\label{2.8}
\begin{split}
  \frac{\sum\limits_{i=1}^s\theta_{\text{ind}}(Q^i)}{\sum\limits_{i=1}^s|Q^i_0|}
  <&2(l_{Q^X}+r_{Q^X})-\frac{2|Q_0^1|}{\sum\limits_{i=1}^s|Q^i_0|}\cdot r_{Q^X}\\&+\frac{2l_{Q^X}\cdot\sum\limits_{i=2}^{s-1}(r_{Q^i}-r_{Q^X})+2\sum\limits_{i=2}^{s-1}l_{Q^i}\cdot r_{Q^X}+2|Q_0^s|\cdot r_{Q^X}}{\sum\limits_{i=1}^s|Q^i_0|}+\frac{2l_{Q^X}\cdot r_{Q^X}}{|Q_0^X|},
  \end{split}
\end{equation}
which is satisfied.

So far, we complete the proof.
\end{proof}

\begin{definition}Let $\mathrm{Ind}(Q)$ be the (finite) set of the isomorphism classes indecomposable representations over $k$ of a Dynkin quiver $Q$. For a nonempty subset $U\subseteq \mathrm{Ind}(Q)$, one introduces
 a subset $S_\mathbb{Z}(Q,U)$ of $\mathbb{Z}^n$ with $n=|Q_0|$ as   
 \begin{align*}
  S_\mathbb{Z}(Q,U)=\left\{
  \begin{aligned} \Theta=(\theta_1,\cdots,\theta_n)\in\mathbb{Z}^n:\  &\textrm{each element of  } U \textrm{ is    stable}\textrm{ with respect to the corresponding } \\  \ & \textrm{weight function } w_\Theta\textrm{ and rank function }r
  \end{aligned} \right\}.
  \end{align*} 
 Obviously, $S_\mathbb{Z}(Q,U)\subset S_\mathbb{Z}(Q,V)$ if $V\subset U$. $S_\mathbb{Z}(Q,U)$ is determined by  finitely many linear inequalities $f_1(\Theta)>0,\cdots f_m(\Theta)>0$, then we define
a subset $S_\mathbb{R}(Q,U)$ of $\mathbb{R}^n$ as
$$S_\mathbb{R}(Q,U)=\{\Theta=(\theta_1,\cdots,\theta_n)\in\mathbb{R}^n: f_1(\Theta)>0,\cdots f_m(\Theta)>0 \},$$
and define a convex polyhedral cone $C(Q,U)$ of $\mathbb{R}^n$ as the closure  $$C(Q,U)=\overline{S_\mathbb{R}(Q,U)}.$$
The faces of maximal dimension in a cone  $C(Q,U)$ are called the  \emph{walls} in $\mathbb{R}^n$.
\end{definition}

\begin{corollary}(\cite{MR1}) Let $Q$ be q quiver of $A_n$-type, then the cardinality of $S_\mathbb{Z}(Q,\mathrm{Ind}(Q))$ is infinite. Moreover, for any element in  $S_\mathbb{Z}(Q,\mathrm{Ind}(Q))$, with respect to  the corresponding  weight  function and rank function
\begin{enumerate}
  \item any semistable representation is polystable;
  \item the Hader-Narasimhan strata  are precisely $GL(Q,d)$-orbits in $\mathrm{\mathbf{Rep}}(Q,d)$.
\end{enumerate}

\end{corollary}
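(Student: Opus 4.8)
The plan is to obtain the infinitude of $S_\mathbb{Z}(Q,\mathrm{Ind}(Q))$ and the polystability claim directly from Theorem \ref{1.3.5}, and then to derive statement (2) from the uniqueness of the Harder--Narasimhan filtration. For infinitude, recall that the intrinsic weight system lies in $S_\mathbb{Z}(Q,\mathrm{Ind}(Q))$ by Theorem \ref{1.3.5}, and that each inequality defining this set, namely $\mu(X')<\mu(X)$ for $X$ indecomposable and $X'$ a proper subrepresentation, rewrites as the \emph{homogeneous} linear condition $w_\Theta(X')\,r(X)-w_\Theta(X)\,r(X')<0$ in $\Theta$; hence $m\Theta\in S_\mathbb{Z}(Q,\mathrm{Ind}(Q))$ for every integer $m\geq 1$, and $C(Q,\mathrm{Ind}(Q))$ is a full-dimensional cone. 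For polystability, I would take $X$ semistable with respect to some $\Theta\in S_\mathbb{Z}(Q,\mathrm{Ind}(Q))$, decompose $X=\bigoplus_j X_j$ into indecomposables by Krull--Schmidt, and note that each $X_j$ is stable, hence semistable, by Theorem \ref{1.3.5}. Since $w_\Theta$ and $r$ are additive, in any short exact sequence $0\to U\to X\to V\to 0$ with $U,V$ nonzero the value $\mu(X)$ lies between $\mu(U)$ and $\mu(V)$, so semistability tested on subobjects is equivalent to semistability tested on quotients; regarding $X_j$ first as a subobject and then as a quotient of $X$ forces $\mu(X_j)=\mu(X)$ for every $j$, so $X$ is a direct sum of stable objects of a common slope, i.e.\ polystable.

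Next I would observe that the same grouping argument makes the Harder--Narasimhan filtration of every representation $X$ of $Q$ split: writing $X\cong\bigoplus_\mu X^{(\mu)}$, where $X^{(\mu)}$ is the sum of the indecomposable summands of slope $\mu$, each $X^{(\mu)}$ is polystable of slope $\mu$, there are no nonzero morphisms from $X^{(\mu)}$ to $X^{(\mu')}$ when $\mu>\mu'$, so the resulting filtration is \emph{the} HN filtration by uniqueness. Since $Q$ is Dynkin, $\mathrm{\mathbf{Rep}}(Q,d)$ has only finitely many isomorphism classes, hence is a finite union of $GL(Q,d)$-orbits, each lying inside a single HN stratum; so statement (2) reduces to the claim that the HN type --- the ordered tuple of dimension vectors of the HN subquotients --- determines $X$ up to isomorphism. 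By the splitting, this in turn reduces to showing that a polystable representation of a fixed slope $\mu$ and a fixed dimension vector $e$ is unique up to isomorphism, equivalently that $e$ admits a unique expression as a nonnegative integer combination of the interval vectors $e_{[a,b]}$ with $\mu(I_{[a,b]})=\mu$.

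For this uniqueness-of-factorization statement I would use that, in $\mathbb{Z}_{\geq 0}^n$, unique factorization over a finite family of vectors is equivalent to $\mathbb{Q}$-linear independence of that family, and then prove independence by the finite-difference map $e_{[a,b]}\mapsto \epsilon_b-\epsilon_{a-1}$, which identifies interval vectors with the incidence vectors of the edges $\{a-1,b\}$ on the vertex set $\{0,1,\dots,n\}$; a family of interval vectors is independent precisely when the corresponding edges form a forest. Writing $P_j=\sum_{i=1}^j(\theta_i-\mu)$ with $P_0=0$, the interval $[a,b]$ has slope $\mu$ iff $P_{a-1}=P_b$, so the edges of slope $\mu$ form a disjoint union of cliques, one per level set of $P$; this is a forest exactly when $P$ attains no value three times, i.e.\ when there is no pair of \emph{adjacent} intervals $[p,q]$, $[q+1,r]$ with $\mu(I_{[p,q]})=\mu(I_{[q+1,r]})$. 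To rule this out, I would use that $I_{[p,r]}$ is thin with connected support, hence indecomposable, hence stable by Theorem \ref{1.3.5}; according to the orientation of the single arrow between $q$ and $q+1$, exactly one of $I_{[p,q]}$, $I_{[q+1,r]}$ is a proper quotient of $I_{[p,r]}$ and the other a proper subrepresentation, so their slopes lie strictly on opposite sides of $\mu(I_{[p,r]})$ and therefore differ.

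The hard part will be this last paragraph. The infinitude and polystability follow immediately once Theorem \ref{1.3.5} is available, but the step from ``HN strata'' to ``$GL(Q,d)$-orbits'' needs the unique-factorization statement, whose crux is the lemma that two adjacent indecomposable intervals never carry the same slope; the delicate point there is correctly extracting the sub/quotient dichotomy inside the stable representation $I_{[p,r]}$ from the local orientation at the junction.
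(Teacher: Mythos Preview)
The paper does not give its own proof of this corollary; it is stated immediately after the main lemma, attributed to \cite{MR1}, with no argument supplied. The implicit claim is that items (1) and (2) are general consequences, established in Reineke's paper, of the mere nonemptiness of $S_\mathbb{Z}(Q,\mathrm{Ind}(Q))$, which Theorem~\ref{1.3.5} provides. So there is no in-paper proof to compare your proposal against.

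That said, your argument is sound and considerably more explicit than a bare citation. The infinitude and polystability steps are routine given Theorem~\ref{1.3.5}; your treatment of (2) is the interesting part and is correct. Two minor comments: the remark that there are no nonzero morphisms $X^{(\mu)}\to X^{(\mu')}$ for $\mu>\mu'$ is true but not needed---once your split filtration has semistable subquotients of strictly decreasing slope, uniqueness of the HN filtration already identifies it as \emph{the} HN filtration; and the aside that $C(Q,\mathrm{Ind}(Q))$ is full-dimensional is also superfluous, since the scaling $m\Theta$ already gives infinitude. The heart of your argument---translating unique factorization into linear independence of the interval vectors of a fixed slope, rewriting that via the difference map as acyclicity of an interval graph on $\{0,\dots,n\}$, and then excluding three equal values of the partial-sum function $P_j=\sum_{i\le j}(\theta_i-\mu)$ by observing that the concatenation $I_{[p,r]}$ of two adjacent equal-slope intervals is stable with one piece a proper sub and the other a proper quotient---is a clean, self-contained route to (2) that the paper itself does not spell out.
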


\subsection{Revisit Intrinsic Weight System via Semi-Invariant Theory}
\begin{proposition}For each indecomposable representation $I_{p,q}$ of the quiver $Q$ of $A_n$-type, one defines  weight systems $\Theta(I_{p,q})$ and $\Theta^\prime(I_{p,q})$ as follows
\begin{align*}
  \Theta(I_{p,q})_i&=\left\{
                      \begin{array}{ll}
                        1, & \hbox{$p< i< q$ \textrm{ and } $i$\textrm{ is a vertex of type I},} \\
                        -1, & \hbox{$p<i<q$ \textrm{ and } $i$\textrm{ is a vertex of type II},}\\
0,& \hbox{$p<i<q$ \textrm{ and } $i$\textrm{ is a vertex of type III or IV},}\\
1, & \hbox{$i=p$ \textrm{ and } $i$\textrm{ is a vertex of type I or III}; $i=p=q$,}\\
0,& \hbox{$i=p<q$ \textrm{ and } $i$\textrm{ is a vertex of type II or IV},}\\
0, & \hbox{$i=p-1$ \textrm{ and } $i$\textrm{ is a vertex of type I or III},}\\
-1, & \hbox{$i=p-1$ \textrm{ and } $i$\textrm{ is a vertex of type II or IV},}\\
0, & \hbox{$i=q+1$ \textrm{ and } $i$\textrm{ is a vertex of type I or IV},}\\
-1, & \hbox{$i=q+1$ \textrm{ and } $i$\textrm{ is a vertex of type II or III},}\\
0, & \hbox{$i<p-1$; $i>q+1$},
                      \end{array}
                    \right.\\
\Theta^\prime(I_{p,q})_i&=\left\{
                      \begin{array}{ll}
                        1, & \hbox{$p< i< q$ \textrm{ and } $i$\textrm{ is a vertex of type I},} \\
                        -1, & \hbox{$p<i<q$ \textrm{ and } $i$\textrm{ is a vertex of type II},}\\
0,& \hbox{$p<i<q$ \textrm{ and } $i$\textrm{ is a vertex of type III or IV},}\\
-1, & \hbox{$i=p$ \textrm{ and } $i$\textrm{ is a vertex of type II or IV}; $i=p=q$,}\\
0,& \hbox{$i=p<q$ \textrm{ and } $i$\textrm{ is a vertex of type Ior III},}\\
1, & \hbox{$i=p-1$ \textrm{ and } $i$\textrm{ is a vertex of type I or III},}\\
0, & \hbox{$i=p-1$ \textrm{ and } $i$\textrm{ is a vertex of type II or IV},}\\
1, & \hbox{$i=q+1$ \textrm{ and } $i$\textrm{ is a vertex of type I or IV},}\\
0, & \hbox{$i=q+1$ \textrm{ and } $i$\textrm{ is a vertex of type II or III},}\\
0, & \hbox{$i<p-1$; $i>q+1$},
                      \end{array}
                    \right.
\end{align*}
then the intrinsic weight system $\Theta$ can be written as  $$\Theta=\sum_{I_{p,q}}c(I_{p,q})\Theta(I_{p,q})\ \ (\textrm{or \ } \Theta=\sum_{I_{p,q}}c(I_{p,q})\Theta^\prime(I_{p,q})),$$
where the sum runs through all indecomposable representations of $Q$, and the coefficients $c(I_{p,q})$'s are \textbf{non-negative integers}, moreover the sum can be taken over the indecomposable representations $I_{p,q}$ satisfying if $p\neq1$ is a vertex of type I or IV then $q=n$ or  $q\neq n$ is  of type II or IV; if  $p$ is a vertex of type II or III then  $q\neq n$ is   of type I or III; if  $p=1$ then  $q\neq n$ is a vertex of type I or III (or satisfying if $p\neq 1$ is a vertex of type II or III then $q=n$ or $q\neq n$ is one of type I or III;  if $p$ is a vertex of type I or IV then $q\neq n$ is one of type II or IV; if $p=1$ then $q\neq n$ is one of type II or IV).
\end{proposition}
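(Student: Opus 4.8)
The plan is to recognize the weight systems $\Theta(I_{p,q})$ and $\Theta^\prime(I_{p,q})$ as, up to sign, the weights of the two Schofield determinantal semi-invariants attached to the indecomposable $I_{p,q}$, and then to reduce the asserted decomposition to an elementary statement about the Euler form $\langle\alpha,\beta\rangle:=\sum_{i\in Q_0}\alpha_i\beta_i-\sum_{a\in Q_1}\alpha_{s(a)}\beta_{t(a)}$ of $Q$. The first step is to verify, by a direct case analysis over the four vertex types (essentially the bookkeeping already performed in the proof of Lemma \ref{total weight}), the identities $\Theta(I_{p,q})_i=\langle\dim I_{p,q},e_i\rangle$ and $\Theta^\prime(I_{p,q})_i=-\langle e_i,\dim I_{p,q}\rangle$ for every vertex $i$, where $e_i$ is the $i$-th coordinate vector and $\dim I_{p,q}$ is the indicator vector of the interval $[p,q]$. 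In particular this fixes the value $\Theta(I_{p,q})_q=\langle\dim I_{p,q},e_q\rangle$ (equal to $1$ if $q$ is of type I or IV and to $0$ if $q$ is of type II or III), which is the datum not listed in the displayed formula when $p<q$. Next I would observe that passing from $Q$ to the opposite quiver $Q^{\mathrm{op}}$ interchanges the vertex types I$\leftrightarrow$II and III$\leftrightarrow$IV, negates both the intrinsic weight system and each $\Theta^\prime(I_{p,q})$ (turning the latter into $\Theta(I_{p,q})$ for $Q^{\mathrm{op}}$), and interchanges the two restricted families in the statement; hence a decomposition of the intrinsic weight system in terms of the $\Theta^\prime(I_{p,q})$ for $Q$ amounts to one of the intrinsic weight system of $Q^{\mathrm{op}}$ in terms of the $\Theta(I_{p,q})$ for $Q^{\mathrm{op}}$, and it suffices to treat the $\Theta$-version for an arbitrary $A_n$-quiver.

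Granting the identification, a relation $\Theta=\sum_{p,q}c(I_{p,q})\Theta(I_{p,q})$ with $c(I_{p,q})\ge 0$ is the same as $\Theta=\langle\Delta,-\rangle$ with $\Delta=\sum_{p,q}c(I_{p,q})\dim I_{p,q}$ lying in the submonoid of $\mathbb{Z}_{\ge0}^{Q_0}$ generated by the dimension vectors of the indecomposables. Since $Q$ is of type $A_n$, the Euler form is unimodular, so there is a \emph{unique} $\Delta^{\ast}\in\mathbb{Z}^{Q_0}$ with $\langle\Delta^{\ast},-\rangle=\Theta$. Writing the Euler matrix $E=(\langle e_i,e_j\rangle)_{i,j}$ as $\mathrm{id}-A$ with $A$ the (nilpotent, since $Q$ has no oriented cycle) matrix recording arrows, one has $E^{-1}=\sum_{k\ge0}A^k\ge0$ entrywise, from which $\Delta^{\ast}_j$ equals the sum of the $\theta_i$ over all vertices $i$ admitting a directed path in $Q$ from $i$ to $j$ (including $i=j$). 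The next step is to show $\Delta^{\ast}_j\ge0$ for every $j$: the set of such $i$ is the vertex set of the maximal fan of directed paths ending at $j$, hence an interval $[a_j,j]$, $[j,b_j]$, $[a_j,b_j]$ or $\{j\}$ according as $j$ is of type III, of type IV, a sink, or a source, and using the explicit weights in \eqref{2.2} together with Lemma \ref{total weight} one evaluates each such partial sum of weights and finds it non-negative. Concretely, the recursions $\Delta^{\ast}_j-\Delta^{\ast}_{j^\prime}=\theta_j$ along an arrow $j^\prime\to j$, $\Delta^{\ast}_j=\theta_j+\Delta^{\ast}_{j-1}+\Delta^{\ast}_{j+1}$ at a sink, and $\Delta^{\ast}_j=\theta_j$ at a source, together with the monotonicity of weights along paths from Lemma \ref{total weight}(1), keep the signs under control; in the linearly oriented case $1\to 2\to\cdots\to n$ one simply gets $\Delta^{\ast}_j=j(n-j)$. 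Once $\Delta^{\ast}\in\mathbb{Z}_{\ge0}^{Q_0}$ is established, the unrestricted claim is immediate, taking $c(I_{p,p})=\Delta^{\ast}_p$ and all other coefficients zero, since $\dim I_{p,p}=e_p$.

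The remaining task, and the step I expect to be the main obstacle, is the refinement: to show that $\Delta^{\ast}$ is already a non-negative integer combination of the vectors $\dim I_{p,q}$ with $I_{p,q}$ restricted to the family described in the statement. This requires proving (i) that the dimension vectors of the restricted family generate, as a monoid, every $\Delta^{\ast}$ that can occur -- equivalently, that an interval $[p,q]$ whose adjacent left arrow points into it (or with $p=1$) but whose right boundary data are incompatible can always be traded, via the linear relations among the $\dim I_{p,q}$, for intervals lying in the restricted family -- and (ii) that this trading keeps all coefficients non-negative integers. I would organize this bookkeeping by the boundary behaviour of the intervals, along the same case division used in the proof of the main theorem; the $\Theta^\prime$-version with its own restricted family then follows from the $Q\mapsto Q^{\mathrm{op}}$ symmetry noted in the first step.
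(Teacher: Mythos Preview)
Your proposal and the paper share the first step---the identification $\Theta(I_{p,q})_i=\langle\dim I_{p,q},e_i\rangle$, $\Theta'(I_{p,q})_i=-\langle e_i,\dim I_{p,q}\rangle$ via the Euler form---but then diverge. You invert the Euler matrix explicitly: the unique $\Delta^{\ast}$ with $\langle\Delta^{\ast},-\rangle=\Theta$ has $j$-th entry equal to the sum of $\theta_i$ over vertices $i$ admitting a directed path to $j$, and you propose to check $\Delta^{\ast}\ge 0$ case by case, obtaining the unrestricted decomposition through the simples. The paper does none of this. Instead, having established that $I_{1,n}$ is stable for the intrinsic $\Theta$ (Lemma~\ref{subweight}), it invokes King's criterion to get a nonzero element of $\mathrm{SI}(Q,\vec n)_{m\Theta}$ for some $m>0$, then Derksen--Weyman saturation to descend to $\mathrm{SI}(Q,\vec n)_{\Theta}\neq 0$, and finally the Derksen--Weyman spanning theorem to write such a semi-invariant as a polynomial in the Schofield determinants $c^{I_{p,q}}$ (respectively $c_{I_{p,q}}$) subject to $\langle d_{I_{p,q}},\vec n\rangle=0$ (respectively $\langle\vec n,d_{I_{p,q}}\rangle=0$). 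Any monomial of weight $\Theta$ then furnishes the non-negative integer decomposition, and the orthogonality constraint \emph{is} the restriction on $(p,q)$ stated in the proposition.

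The trade-off is clear. Your route is more elementary and actually computes $\Delta^{\ast}$ (hence candidate coefficients), but the refinement you flag as the main obstacle---showing that $\Delta^{\ast}$ lies in the submonoid generated by the restricted family---remains a genuine combinatorial task in your scheme. In the paper's argument this restriction comes for free from the structure of the semi-invariant ring: only the $I_{p,q}$ with $\langle d_{I_{p,q}},\vec n\rangle=0$ contribute generators, so the decomposition is automatically supported on the restricted set. If you wish to keep your direct approach, the cleanest way to close the gap is to observe that the restricted family is exactly $\{I_{p,q}:\langle d_{I_{p,q}},\vec n\rangle=0\}$ and then import the semi-invariant argument just for that step; otherwise you will need to carry out the interval-trading bookkeeping you sketch.
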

\begin{proof}We prove this proposition by virtue of the semi-invariant theory. Let us first recall it briefly.
For a representation $X$ of a general quiver $Q$ with the dimension vector $d$ , every weight system $W=(W_i)\in \mathbb{Z}^{|Q_0|}$ on $Q$ defines a character $\chi_\Theta$ of reductive algebraic group $GL(Q,d)=\prod_{i\in Q_0}GL(d_i)$ acting on $X$  as a homomorphism
$$\chi_W:GL(Q,d)\rightarrow k^{\times}, g=(g_i:g_i\in GL(d_i))\mapsto \prod_{i\in Q_0}\det(g_i)^{W_i},$$
conversely, every character of $GL(d)$ must look like the above form. Let $$\mathrm{\mathbf{Rep}}(Q,d)=\bigoplus_{a\in Q_1}\Hom(k^{d_{s(a)}},k^{d_{t(a)}})$$ be the affine variety  of
representations of $Q$ with dimension vector $d$, a polynomial function $f$ in $k[\mathrm{\mathbf{Rep}}(Q,d)]$ is called a $W$-semi-invariant if $g\cdot f=\chi_W(g)f$ for any $g\in GL(X)$. Denote by $\mathrm{SI}_W(Q,d)$ the vector space  of $W$-semi-invariants, then the direct sum $$\mathrm{SI}(Q,d)=\bigoplus_{W\in \mathbb{Z}^{|Q_0|}}\mathrm{SI}_W(Q,d)$$
carries a ring structure, hence called the ring of semi-invariant, moreover $\mathrm{SI}(Q,d)=k[\mathrm{\mathbf{Rep}}(Q,d)]^{SL(Q,d)}$
for $SL(Q,d)=\prod_{i\in Q_0}SL(d_i)$ is the ring of polynomials in $k[\mathrm{\mathbf{Rep}}(Q,d)]$ which is stable under the action of $SL(d)$. Let $X$, $Y$ be two representations of a quiver $Q$ of $A_n$-type with dimension vectors $d_X, d_Y$ respectively, the Euler inner product
is given by
\begin{align*}
  \langle d_X, d_Y\rangle&=\dim_k\Hom_Q(X,Y)-\dim_k\mathrm{Ext}_Q(X,Y)\\
  &=\sum_{i\in Q_0}(d_X)_i(d_Y)_i-\sum_{a\in Q_1}(d_X)_{s(a)}(d_Y)_{t(a)}\\
  &=\sum_{i,i+1\in Q_0}((d_X)_i(d_Y)_i-(\widehat {d_X})_{ i}(\widehat{d_Y})_{ {i+1}}),
\end{align*}
where $i+1$ stands for the next vertex of $i$ along the reference direction, and
\begin{align*}
 (\widehat {d_X})_{ i}&=\left\{
                            \begin{array}{ll}
                              (d_X)_i, & \hbox{ $i$\textrm{ is a vertex of type I or  III},} \\
                              (d_X)_{i+1}, & \hbox{ $i$\textrm{ is a vertex of type II or  IV};}
                            \end{array}
                          \right.\\
(\widehat {d_Y})_{ i}&=\left\{
                            \begin{array}{ll}
                              (d_Y)_i, & \hbox{ $i$\textrm{ is a vertex of type II or  III},} \\
                              (d_Y)_{i-1}, & \hbox{ $i$\textrm{ is a vertex of type I or  IV}.}
                            \end{array}
                          \right.
\end{align*}
Define a map $f_X^Y:\bigoplus_{i\in Q_0}\Hom(X_i,Y_i)\rightarrow\bigoplus_{a\in Q_1}\Hom(X_{s(a)},Y_{t(a)})$ by
$$(f_i)_{i\in Q_0}\mapsto (f_{t(a)}X_a-Y_af_{s(a)})_{a\in Q_1}.$$
If $ \langle d_X, d_Y\rangle=0$, the matrix of $f^X_Y$ is a square matrix, then one can define a semi-invariant $c(X,Y)=\det f_X^Y$ of the action $GL(Q,d_X)\times GL(Q,d_Y)$ on $\mathrm{\mathbf{Rep}}(Q,d_X)\times \mathrm{\mathbf{Rep}}(Q,d_Y)$. For a fixed representation $X$ (or $Y$), the restriction $c(X,Y)$ to $\{X\}\times \mathrm{\mathbf{Rep}}(Q,d_Y)$ (or $\mathrm{\mathbf{Rep}}(Q,d_X)\times \{Y\}$) defines a semi-invariant $c_X(Y)$ (or $c^Y(X)$) in  $\mathrm{SI}(Q,d_Y)$  with respect to the weight system
$W_X=\{(W_X)_i\}_{i\in Q_0}$, where
 $$(W_X)_i=\langle d_X, d_i\rangle=\left\{
             \begin{array}{ll}
               (d_X)_i, & \hbox{$i$ \textrm{ is a vertex of type I};} \\
               (d_X)_i-(d_X)_{i-1}-(d_X)_{i+1}, & \hbox{$i$ \textrm{ is a vertex of type II};} \\
(d_X)_i-(d_X)_{i-1}, & \hbox{$i$ \textrm{ is a vertex of type III};} \\
(d_X)_i-(d_X)_{i+1}, & \hbox{$i$ \textrm{ is a vertex of type IV}}, \\
             \end{array}
           \right.
$$
for $(d_i)_j=\delta_{ij}$ (or $\mathrm{SI}(Q,d_X)$ with respect to the weight system
$W^Y=\{(W^Y)_i\}_{i\in Q_0}$, where
 $$(W^Y)_i=-\langle d_i,d_Y\rangle=\left\{
             \begin{array}{ll}
               -(d_Y)_i+(d_Y)_{i-1}+(d_X)_{i+1}, & \hbox{$i$ \textrm{ is a vertex of type I};} \\
               -(d_Y)_i, & \hbox{$i$ \textrm{ is a vertex of type II};} \\
-(d_Y)_i+(d_Y)_{i+1}, & \hbox{$i$ \textrm{ is a vertex of type III};} \\
-(d_Y)_i+(d_Y)_{i-1}, & \hbox{$i$ \textrm{ is a vertex of type IV}} \\
             \end{array}
           \right.).$$ Derksen and Weyman's remarkable theorem \cite{w} asserts that the semi-invariants of type $c_X(Y)$ (or $c^Y(X)$) span all the weight spaces in the  rings $\mathrm{SI}(Q,d_Y)$ (or $\mathrm{SI}(Q,d_X)$).
One can easily check
 $$W_{I_{p,q}}=\Theta(I_{p,q}),\ \ \ W^{I_{p,q}}=\Theta^\prime(I_{p,q}).$$
on the other hand,  $I_{1,n}$ is stable with respect to the intrinsic weight function, thus for each subrepresentations $R\subset I_{p,q}$ we have $w(R)<0$, then by King's result, there exists an $m>0$ and $f\in \mathrm{SI}(Q,\overrightarrow{n})_{m\Theta}$ such that $f(I_{1,n})\neq 0$, where $\overrightarrow{n}=(1,\cdots, 1)$ denotes for the dimension vector of $I_{1,n}$. Derksen-Weyman  theorem implies that the set $\Sigma(Q, d)=\{W:\mathrm{SI}(Q,d)_W\neq 0\}$ is saturated, therefore $\mathrm{SI}(Q,\overrightarrow{n})_{\Theta}\neq 0$. The ring $\mathrm{SI}(Q,\overrightarrow{n})$ is generated by all $c^{I_{p,q}}$'s with $\langle d_{I_{p,q}},\overrightarrow{n}\rangle=0$ (or $c_{I_{p,q}}$'s with $\langle\overrightarrow{n}, d_{I_{p,q}}\rangle=0$). All these facts together lead to the final conclusion.
\end{proof}

\begin{remark}We  write $\Theta=\Theta^+-\Theta^-$, where $\Theta^+=\{\Theta^+_i\}$
with $\Theta^+_i=\max\{\theta_i,0\}$ and $\Theta^-=\{\Theta^-_i\}$
with $\Theta^-_i=\max\{-\theta_i,0\}$. For a dimension vector $d$, if $\sum_{i\in Q_0}d_i\Theta_i\neq0$, then there is only trivial $\Theta$-semi-invariant. Therefore, we assume $\sum_{i\in Q_0}d_i\Theta_i=0$, i.e, $\sum_{i\in Q_0}d_i\Theta^+_i=\sum_{i\in Q_0}d_i\Theta^-_i=l$, then for a representation $X\in \mathrm{\mathbf{Rep}}(Q,d)$, one can define an $l\times l$ matrix
$$A:\bigoplus_{i\in Q_0}X_i^{\Theta^+_i}\rightarrow \bigoplus_{i\in Q_0}X_i^{\Theta^-_i},$$
where each block $A_{ij}\in\Hom(X_i,X_j)$ has a form
$$A_{ij}=\left\{
                  \begin{array}{ll}
                    X(p_{i,j}), & \hbox{if there exists a path $p_{i,j}$ from $i$ to $j$,} \\
                    0, & \hbox{\textrm{otherwise},}
                  \end{array}
                \right.
$$
with   $X(p_{i,j})$ denoting the composition of the morphisms $V_a$ for the arrows $a$'s consisting of the path $p_{i,j}$. Then $\det A$ is a semi-invariant in $\mathrm{SI}(Q,d)_\Theta$, and such semi-invariants generate the space $\mathrm{SI}(Q,d)_\Theta$.
\end{remark}

\noindent\textbf{Acknowledgements.}
 During the writing of this paper, we made several communications with Professors Markus Reineke,  Lutz Hille and Daniel Juteau. We would like to thank them for their useful suggestions.

\end{document}